\newtheorem{thm}{Theorem}[section]
\newtheorem{cor}[thm]{Corollary}
\newtheorem{prop}[thm]{Proposition}
\newtheorem{defn}[thm]{Definition}
\newtheorem{rem}[thm]{Remark}
\numberwithin{equation}{section}
\DeclareMathOperator{\Hom}{Hom}
\DeclareMathOperator{\Der}{Der}
\DeclareMathOperator{\GL}{GL}
\DeclareMathOperator{\Orb}{Orb}
\DeclareMathOperator{\Leib}{\mathcal{L}\emph{eib}}
\DeclareMathOperator{\LN}{\mathcal{L}\emph{N}}
\DeclareMathOperator{\LR}{\mathcal{L}\emph{R}}
\DeclareMathOperator{\lev}{lev}
\begin{document}

\title[On the degenerations of solvable Leibniz algebras]{On the degenerations of solvable Leibniz algebras}

\author{J. M. Casas, A. Kh. Khudoyberdiyev, M. Ladra, B. A. Omirov}
\address{[J.\ M.\ Casas] Department of Applied Mathematics I, E. E. Forestal, University of Vigo, 36005 Pontevedra, Spain.} \email{jmcasas@uvigo.es}
\address{[M.\ Ladra] Department of Algebra, University of Santiago de Compostela, 15782, Spain.} \email{manuel.ladra@usc.es}

\address{[A.\ Kh.\ Khudoyberdiyev and B.\ A.\ Omirov] Institute of Mathematics, National University of Uzbekistan,
Tashkent, 100125, Uzbekistan.} \email{khabror@mail.ru, omirovb@mail.ru}

\begin{abstract}

The present paper is devoted to the description of rigid solvable
Leibniz algebras. In particular, we prove that solvable Leibniz
algebras under some conditions on the nilradical are rigid and we describe four-dimensional solvable Leibniz algebras with three-dimensional rigid nilradical.  We show that the Grunewald-O'Halloran's conjecture ``any $n$-dimensional nilpotent Lie algebra is a degeneration of some algebra of the same dimension'' holds for Lie algebras of dimensions less than six  and for Leibniz algebras of dimensions less than four. The algebra of level one, which is
omitted in the 1991 Gorbatsevich's paper, is indicated.

\end{abstract}

\subjclass[2010]{14D06, 14L30,
17B30, 17A32}

\keywords{degeneration, irreducible
component, solvability, nilpotency, rigid algebra, variety of
algebras, Lie algebra, Leibniz algebra}
\thanks{}

\maketitle

\section{Introduction}

Largely because of their importance to string theory, quantum field theory and other branches of fundamental research in mathematical physics, noncommutative analogs of many classical constructions have received much attention in the past
few years \cite{EF, GS}.

The noncommutative analog of Lie algebras are Leibniz algebras, discovered by Loday when he handled periodicity phenomena in algebraic $K$-theory \cite{Lo}. This algebraic structure found  applications  in several fields as Physics and Geometry \cite{EFO, KW, Lod 1, Lod 2}.

Important subjects playing a relevant role in Mathematics and Physics are degenerations, contractions and deformations of Lie and Leibniz algebras. Namely, in  \cite{Seg} the notion of contractions of Lie algebras on physical grounds was  introduced:
if two physical theories (like relativistic and classical mechanics) are related by a limiting
process, then the associated invariance groups (like the Poincaré and Galilean groups)
should also be related by some limiting process. If the velocity of light is assumed to
go to infinity, relativistic mechanics ``transforms'' into classical mechanics. This also induces
a singular transition from the Poincaré algebra to the Galilean one.

Other example is a limiting process from quantum mechanics to classical mechanics under $\hbar \to 0$ of the Planck constant, that
corresponds to the contraction of the Heisenberg algebras to the abelian ones of the same
dimensions \cite{Druhl}.

Nevertheless, as it was proved in \cite{WW}, the notions of deformations, contractions and degenerations are isomorphic over
 the fields $\mathbb{R}$ or $\mathbb{C}$. Degenerations of Lie and Leibniz algebras were the subject of  numerous papers,
  see for instance \cite{Bal, Burde1, Burde2, Gorb,RA, Seeley, WW} and references given therein, and their research continues actively.
   These facts motivate that we focus our attention in the study of degenerations of solvable Leibniz algebras.

In order to do so, we know that an $n$-dimensional Leibniz algebra may be
considered as an element $\lambda$ of the affine variety
$\Hom(V\otimes V,V)$ via the  mapping $\lambda \colon V\otimes
V\to V$ defining the Leibniz bracket on a vector space $V$ of dimension $n$. Since
Leibniz algebras are defined via polynomial identities, the set of $n$-dimensional
Leibniz algebra structures,  $\Leib_n$, forms an algebraic subset of the
variety $\Hom(V\otimes V,V)$ and the linear reductive group
$\GL_n(F)$ acts on $\Leib_n$ via change of basis, i.e.,
\[(g*\lambda)(x,y)=g\Big(\lambda\big(g^{-1}(x),g^{-1}(y)\big)\Big), \quad  g \in \GL_n(F), \  \lambda \in \Leib_n .\]

The orbits $\Orb(-)$ under this action are the  isomorphism
classes of algebras. Note that solvable (respectively, nilpotent)
Leibniz algebras of the same dimension also  form an invariant
subvariety of the variety of Leibniz algebras under the mentioned
action.

Let $V$ be an $n$-dimensional vector space over a field $F$. The bilinear maps $V\times V \to V$ form an
$F^{n^3}$-dimensional affine space. We shall
consider the Zariski topology on this space. Recall, a set is
called irreducible if it cannot be represented as a union of two
nontrivial closed subsets, otherwise it is called reducible. The
maximal irreducible closed subset of a variety is called an
irreducible component. From algebraic geometry we know that an
algebraic variety is a union of irreducible components and that  closures
of open sets produce irreducible components. Therefore, for the
description of a variety it is very important to find all open
sets. Since under the above action the variety of Leibniz algebras
consists of orbits of algebras, the description of the variety is
reduced to  find the open orbits. By Noetherian consideration
there are a finite number of open orbits. In any variety of algebras there are
algebras with open orbits (so-called rigid algebras). Thus, the
closure of orbits of rigid algebras gives irreducible components
of the variety. Hence, to describe the variety of algebras it is
enough to describe all rigid algebras.

A powerful tool in the study of a variety of algebras is that for
constructive subsets of algebraic varieties, the closures  with
respect to the Euclidean and the Zariski topologies coincide. In
particular, for an algebraically closed field $F$, the limit in
usual Euclidean topology  leads to the same limit as in the
Zariski topology. It has lead to consideration of such notions as
deformations and degenerations of algebras. In fact, a rigid
algebra is characterized by absence of any degeneration coming from any algebra, that is,
the orbit of the rigid algebra does not belong to the closure of the orbit of any other algebra. Existence or absence of
degeneration in a given variety of algebras is revealed by
construction or by using invariant arguments. This approach is
very effective in case of nilpotent and solvable algebras.

The description of a variety of any class of algebras is a very
difficult problem. Note that for the description of the variety of
nilpotent Lie algebras with dimensions less than eight the works
\cite{Burde1, GrunHall, Seeley} are devoted. The complete
description of orbits closure of four-dimensional Lie algebras is
given in \cite{Burde2}. To the investigation of the variety of
Leibniz algebras  the work \cite{AOR} is devoted. In particular, in
\cite{AOR} it is described all irreducible components of the
varieties of complex nilpotent Leibniz algebras of dimensions less
than 5.

On the other hand, Grunewald and O'Halloran in \cite{GrunHall} proposed the following:

\textbf{Conjecture:} Any $n$-dimensional nilpotent Lie algebra is
a degeneration of some algebra of the same dimension.

In other words there is not nilpotent rigid algebra in the variety
of Lie algebras, although a rigid Lie algebra exists in the
subvariety of nilpotent Lie algebras. The statement is based on the fact
that second cohomology groups of rigid algebras are trivial,
while for nilpotent Lie algebras, they are always nontrivial.
Similarly to the case of Lie algebras, Balavoine proved the general
principles for deformations and rigidity of Leibniz algebras
\cite{Bal}.

In this paper we prove that solvable Leibniz algebras, whose
nilradical is rigid in the variety of nilpotent Leibniz algebras,
cannot be obtained as a degeneration of a solvable Leibniz algebra with different nilradical. In other words,
 any solvable Leibniz algebra with a given rigid nilradical, such that there is not other solvable
  Leibniz algebra with the same nilradical, is rigid.
   The description of solvable Leibniz algebras with
three-dimensional rigid nilradical is obtained. Moreover, we
prove that the Conjecture above is true for dimensions less than six and for
the case of Leibniz algebras the Conjecture is true for dimension
less than four. Finally, we find one algebra which was omitted in
the work \cite{Gorb}.

Throughout the paper we consider finite-dimensional vector spaces
and algebras over the field $\mathbb{C}$. Moreover, in the
 multiplication table of an algebra omitted products are assumed to be
zero and if it is not noticed we shall consider non-nilpotent
solvable algebras.

\section{Preliminaries}

In this section we give necessary definitions and results for
understanding main parts of the work.

\begin{defn}[\cite{Lo}]
A vector space $L$ over a field $F$ with a binary operation
$[-,-]$ is called a Leibniz algebra, if for any $x, y, z \in L$ the
so-called  Leibniz identity  holds
  \[
\big[x, [y, z]\big] = \big[[x, y], z\big] - \big[[x, z], y\big].
\]

\end{defn}

Every Lie algebra is a Leibniz algebra, but the bracket in a
Leibniz algebra needs not be skew-symmetric.

For a Leibniz algebra $L$ consider the following lower central  and
derived series:
\begin{align*}
L^1 & =L, \qquad  L^{k+1}=[L^k,L^1], \qquad \ \ k \geq 1, \\
L^{[1]} & = 1,  \qquad  L^{[s+1]} = [L^{[s]}, L^{[s]}], \qquad s \geq 1.
\end{align*}
\begin{defn}  A Leibniz algebra $L$ is said to be
nilpotent (respectively, solvable), if there exists  $p\in\mathbb
N$ $(q\in \mathbb N)$ such that $L^p=0$ (respectively,
$L^{[q]}=0$).
\end{defn}

It is well known \cite{Ayup} that in Leibniz algebras case, in each dimension, there
exists a unique (up to isomorphism)  algebra with maximal index of
nilpotency  whose  multiplication table is:
\[\mathbf{NF_n}: \ [e_i, e_1] = e_{i+1}, \quad 1 \leq i
\leq n-1.\]

Denote by $\Leib_n$ (respectively, by $\LN_n$  and $\LR_n$) the set
of all $n$-dimensional (respectively, nilpotent and solvable)
Leibniz algebras.

\begin{rem}
Null-filiform Leibniz algebras of dimension $n$ can be characterized as $n$-dimensional nilpotent Leibniz algebras such that the $n$-th term in the lower central series is nontrivial. This means that their orbits are open sets in the variety of $n$-dimensional nilpotent Leibniz algebras with respect to the Zariski topology, hence  null-filiform Leibniz algebras of dimension $n$ are rigid.
\end{rem}

Let $\lambda$ and $\mu$ be Leibniz algebras  of
the same dimension over a field $F$.

\begin{defn}  It is said that an algebra $\lambda$
degenerates to an algebra $\mu$, if $\Orb(\mu)$ lies in the Zariski closure
of $\Orb(\lambda)$, $\overline{\Orb(\lambda)}$. We denote this by $\lambda \rightarrow \mu$.

The degeneration $\lambda \rightarrow \mu $ is called a direct degeneration if there is not a chain of nontrivial
degenerations of the form: $\lambda \rightarrow \nu \rightarrow
\mu$.

The level of an algebra
$\lambda$, denoted by $\lev_n(\lambda)$, is the maximum length of a
chain of direct degenerations, which, of course, ends with the
algebra $\mathbf{a_n}$ (the algebra with zero multiplication).
\end{defn}

\begin{rem}
Recall that any $n$-dimensional algebra degenerates to the algebra
$\mathbf{a_n}$.
\end{rem}

Further we shall use the fact from the algebraic groups theory on
constructive subsets of algebraic varieties that their closures
relative to the Euclidean and the Zariski topologies coincide.
It is not difficult to see that the $\GL_n(\mathbb{C})$-orbits are
constructive sets. Therefore, the usual Euclidean topology on
$\mathbb{C}^{n^3}$ leads to the same degenerations as does the
Zariski topology, that is, the following condition $\lambda\to\mu$
implies that
\[ \text{there exists} \ \  g_t\in \GL_n\big(\mathbb{C}(t)\big) \ \text{ such that } \ \lim_{t\to
0} \ g_t*\lambda=\mu,\] where $\mathbb{C}(t)$ is the field of
fractions of the polynomial ring $\mathbb{C}[t]$.

\begin{rem} It is easy to note that a rigid nilpotent (solvable) algebra
cannot be obtained by degeneration of any other nilpotent
(solvable) algebra.
\end{rem}

Further we shall need the following results.


\begin{prop}[\cite{GrunHall}] \label{P:borel} Let $G$ be a reductive algebraic
group over $\mathbb{C}$ with Borel subgroup $B$ and let $X$ be an
algebraic set on which $G$ acts rationally. Then
\[\overline{G * x} = G * \overline{(B * x)} \quad \text{for all} \  x\in X.\]
\end{prop}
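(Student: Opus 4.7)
The plan is to prove the two inclusions separately. The inclusion $G*\overline{B*x} \subseteq \overline{G*x}$ is immediate: the orbit closure $\overline{G*x}$ is closed and $G$-invariant, since the $G$-action is by homeomorphisms, so it contains $B*x$ and hence $\overline{B*x}$; applying the $G$-action then gives $G*\overline{B*x} \subseteq G*\overline{G*x} = \overline{G*x}$.

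For the reverse inclusion, since $G*\overline{B*x}$ clearly contains $G*x$, it is enough to show that $G*\overline{B*x}$ is closed in $X$, for then it will contain $\overline{G*x}$. The key geometric input is the completeness (in fact projectivity) of the flag variety $G/B$, which is the defining property of a Borel subgroup of a reductive group. One first notes that $\overline{B*x}$ is $B$-invariant: the orbit $B*x$ is $B$-invariant, and each element of $B$ acts as a homeomorphism on $X$.

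Now consider the morphism $\phi\colon G \times X \to X$ given by $\phi(g,y) = g^{-1}*y$, and set $W = \phi^{-1}(\overline{B*x})$, which is closed in $G \times X$. The $B$-invariance of $\overline{B*x}$ implies that $W$ is stable under the right $B$-action $(g,y)\cdot b = (gb,y)$ on $G \times X$, so $W$ descends to a closed subset $\widetilde{W} \subseteq (G/B) \times X$. Since $G/B$ is complete, the second projection $\pi\colon (G/B) \times X \to X$ is a closed morphism, so $\pi(\widetilde{W})$ is closed in $X$. A direct check shows
$$\pi(\widetilde{W}) = \set{y \in X : g^{-1}*y \in \overline{B*x} \ \text{for some} \ g \in G} = G*\overline{B*x},$$
which establishes closedness and completes the proof.

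The principal subtlety is this closedness step; it rests essentially on the completeness of $G/B$. An alternative route through the associated fiber bundle $G \times^B \overline{B*x} \to X$ fails in general, because its total space is proper only when $\overline{B*x}$ itself is complete, whereas the product-space construction above uses only properness of the factor $G/B$ and so applies without any assumption on $\overline{B*x}$ or $X$.
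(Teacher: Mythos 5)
The paper offers no proof of this proposition: it is quoted verbatim from Grunewald--O'Halloran \cite{GrunHall} (their Lemma~1.1) and used as a black box, so there is no in-paper argument to compare yours against. Your proof is the standard one from that source and is essentially correct. The easy inclusion is handled properly, and the reduction of the hard inclusion to the closedness of $G*\overline{B*x}$, via the closed subset $W=\phi^{-1}(\overline{B*x})$ of $G\times X$, its descent to $(G/B)\times X$, and the properness of $G/B$, is exactly the right mechanism. One step deserves a sentence you did not supply: why the $B$-saturated closed set $W$ has \emph{closed} image $\widetilde{W}$ under the quotient $G\times X\to (G/B)\times X$. This follows because $G\to G/B$ is an open map (a locally trivial quotient by a free $B$-action), so the image of the open saturated complement $W^{c}$ is open and, by saturation, is precisely the complement of the image of $W$. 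Without some such remark, ``descends to a closed subset'' is an assertion rather than an argument.

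Your closing paragraph is the only place you go astray, though it is a side remark and does not affect the proof. The associated-bundle route does not ``fail'': the map $G\times^{B}\overline{B*x}\to (G/B)\times X$, $[g,y]\mapsto (gB, g*y)$, is a closed immersion with image exactly your $\widetilde{W}$, and one then projects using completeness of $G/B$ just as you do. Properness of the total space $G\times^{B}\overline{B*x}$ over a point is irrelevant; what matters is properness of $G/B$, which both formulations use in the same way. So the two constructions are the same argument in different clothing rather than a working method and a broken one.
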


Note that for the classification of solvable Leibniz algebras with
given nilradical, it is important the number of nil-independent
derivations of the  nilradical. Namely, for a solvable Leibniz algebra
with nilradical $N$, the dimension of the complementary vector
space to $N$ is not greater than the maximal number of
nil-independent derivations of $N$.

\begin{thm}[\cite{Cas1}] \label{P:NFn} Let $R$ be a solvable Leibniz algebra whose
nilradical is $\mathbf {NF_n}$. Then there exists a basis $\{e_1, e_2,
\dots, e_n, x\}$ of the algebra $R$ such that the multiplication
table of $R$ with respect to this basis has the following form:
\[\mathbf{RNF_n}: \left\{\begin{array}{ll} [e_i,e_1]=e_{i+1}, & 1\leq i\leq n-1,\\[1mm]
[x,e_1]=e_1, & \\[1mm]
[e_i,x]=-ie_i, & 1\leq i\leq n.\end{array}\right.\] \end{thm}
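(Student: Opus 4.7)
The plan is to derive the structure from a careful analysis of $\Der(\mathbf{NF_n})$ combined with repeated applications of the Leibniz identity. Since $R$ is solvable but not nilpotent, $\dim R > n$, and the dimension of $R/N$ is bounded above by the maximal number of nil-independent derivations of $N = \mathbf{NF_n}$. A direct induction using $d([e_i,e_1]) = [d(e_i),e_1] + [e_i,d(e_1)]$ shows that any $d\in\Der(\mathbf{NF_n})$ is uniquely determined by $d(e_1) = a_1 e_1 + \cdots + a_n e_n$ and has the explicit form
\[
d(e_i) = i a_1 e_i + a_2 e_{i+1} + a_3 e_{i+2} + \cdots + a_{n-i+1} e_n, \quad 2 \le i \le n.
\]
Such a derivation is nilpotent exactly when $a_1 = 0$, so the maximal number of nil-independent derivations equals $1$, and consequently $\dim R = n+1$.

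Next I would pick any $x\in R\setminus N$. The right multiplication $R_x\colon y\mapsto [y,x]$ restricts to a derivation of $N$ which must be non-nilpotent, since otherwise Engel's theorem for Leibniz algebras (together with the fact that $R_x$ acts trivially on the one-dimensional quotient $R/N$) would force $R$ to be nilpotent. After rescaling $x$, I may arrange $a_1=-1$, so in the basis $\{e_i\}$ the operator $R_x|_N$ is upper triangular with distinct diagonal entries $-1,-2,\dots,-n$. Hence $R_x|_N$ is diagonalizable with one-dimensional eigenspaces $N_{-1},\dots,N_{-n}$.

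I would then pick a nonzero $\tilde{e}_1\in N_{-1}$ and define $\tilde{e}_{i+1}:=[\tilde{e}_i,\tilde{e}_1]$. The derivation property of $R_x$ gives $\tilde{e}_{i+1}\in N_{-(i+1)}$, and a leading-coefficient computation in the original basis confirms that each $\tilde{e}_i$ is nonzero. So $\{\tilde{e}_1,\dots,\tilde{e}_n\}$ is a basis of $N$ satisfying $[\tilde{e}_i,\tilde{e}_1]=\tilde{e}_{i+1}$ and $[\tilde{e}_i,x]=-i\tilde{e}_i$. The vanishing of the remaining products $[\tilde{e}_i,\tilde{e}_j]$ with $j\ge 2$ follows by a short induction on $j$ using the Leibniz identity: the base case $[\tilde{e}_i,\tilde{e}_2]=[\tilde{e}_i,[\tilde{e}_1,\tilde{e}_1]]=0$ is immediate, and the inductive step reduces $[\tilde{e}_i,\tilde{e}_{j+1}]=[\tilde{e}_i,[\tilde{e}_j,\tilde{e}_1]]$ to terms already known to vanish.

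Finally, for the left action of $x$, the same type of induction yields $[x,\tilde{e}_i]=0$ for all $i\ge 2$. For $[x,\tilde{e}_1]$, the inclusion $[x,\tilde{e}_1]+[\tilde{e}_1,x]\in I$, where $I$ is the two-sided ideal generated by squares and here equals the linear span of $\tilde{e}_2,\dots,\tilde{e}_n$, gives $[x,\tilde{e}_1]=\tilde{e}_1+c_2\tilde{e}_2+\cdots+c_n\tilde{e}_n$. Expanding the Leibniz identity applied to the triple $(x,x,\tilde{e}_1)$ forces $c_2=0$, and the substitution $x\mapsto x+\sum_{k=2}^{n}\mu_k\tilde{e}_k$ (which leaves $R_x|_N$ intact because $[\tilde{e}_i,\tilde{e}_k]=0$ for $k\ge 2$) can be used first to kill $c_3,\dots,c_n$ and then to normalize $[x,x]=0$. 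The main obstacle I expect is the bookkeeping of step three, together with the delicate point that the residual coefficient $c_2$ cannot be absorbed by any basis change and must instead be forced to zero by the Leibniz identity itself.
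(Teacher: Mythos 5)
The paper does not actually prove Theorem \ref{P:NFn}: it is quoted without proof from the reference \cite{Cas1}, so there is no internal argument to compare yours against. That said, your proposal is the standard (Mubarakzjanov-style) derivation of this classification and, as far as I can tell, it is sound: the computation of $\Der(\mathbf{NF_n})$ is correct, a derivation is nilpotent exactly when $a_1=0$, so there is one nil-independent derivation and $\dim R=n+1$; the restriction $\mathcal{R}_{x}|_N$ is a non-nilpotent derivation, hence (after scaling) diagonalizable with simple eigenvalues $-1,\dots,-n$; and rebuilding the basis from an eigenvector $\tilde e_1$ of eigenvalue $-1$ (whose $e_1$-coordinate is nonzero because $\langle e_2,\dots,e_n\rangle$ is invariant) gives $[\tilde e_i,\tilde e_1]=\tilde e_{i+1}$, $[\tilde e_i,x]=-i\tilde e_i$, and the vanishing of the remaining products by your Leibniz-identity inductions. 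The one place where your outline is thinner than it should be is the final normalization. Writing $[x,\tilde e_1]=\tilde e_1+\sum_{k\geq 2}c_k\tilde e_k$ and $[x,x]=\sum_{k\geq 2}\gamma_k\tilde e_k$, the identity $\big[x,[x,\tilde e_1]\big]=\big[[x,x],\tilde e_1\big]-\big[[x,\tilde e_1],x\big]$ yields not only $c_2=0$ but also $\gamma_k=-kc_{k+1}$ for $2\leq k\leq n-1$. You need this extra relation: the shift $x\mapsto x+\sum_{k\geq 2}\mu_k\tilde e_k$ moves the pair $(c_{k+1},\gamma_k)$ by $(\mu_k,-k\mu_k)$ in lockstep, so killing $c_3,\dots,c_n$ and killing $\gamma_2,\dots,\gamma_{n-1}$ are not independent normalizations --- they succeed simultaneously precisely because of that relation, with the remaining free parameter $\mu_n$ absorbing $\gamma_n$. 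Since the relation falls out of the very identity you already invoke for $c_2=0$, this is a gap of exposition rather than of substance, and the proof goes through.
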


In \cite{GrunHall} it was shown that the rigid nilpotent Lie
algebras in dimensions less than six are the following:
\[\begin{array}{ll}
\mathbf{n_3}:& [e_1, e_2] = -[e_2, e_1]= e_3;\\
\mathbf{n_4}:& [e_1, e_2] = -[e_2, e_1]= e_3, \quad [e_1, e_3] = -[e_3, e_1]= e_4;\\
\mathbf{n_5}:& [e_1, e_2] = -[e_2, e_1]= e_3, \quad [e_1, e_3] = -[e_3, e_1]= e_4, \\
&  [e_1, e_4] = -[e_4, e_1]= e_5, \quad [e_2, e_3] = -[e_3,
e_2]= e_5.
\end{array}\]

Due to \cite{AOR} we can present the list of three-dimensional nilpotent rigid Leibniz algebras:
\[\begin{array}{rl}
 \boldsymbol{\lambda_4}(\alpha): &  [e_1, e_1] =
e_3,\quad   [e_2, e_2] =\alpha e_3,\quad  [e_1, e_2] = e_3, \ \
\alpha\neq 0;\\
 \boldsymbol{ \lambda_5}:& [e_2, e_1]=e_3, \quad [e_1, e_2] = e_3;\\
\boldsymbol{\lambda_6}: &  [e_1, e_1] = e_2, \quad [e_2, e_1] = e_3.
\end{array}\]



\begin{prop}[\cite{AOR}] \label{P:n2} Let $\lambda$ be a complex non Lie algebra of
$\mathcal{L}eib_n$. Then $\lambda \to  \mathbf{n_2}\oplus \mathbb{C}^{n-2}$, where
$\mathbf{n_2}: [e_1, e_1] =e_2$ is a two-dimensional non-abelian nilpotent
Leibniz algebra.
\end{prop}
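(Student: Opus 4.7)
The plan is to exploit the failure of skew-symmetry in $\lambda$ to single out a vector $v$ with $[v,v] \neq 0$, and then to realise the degeneration by a purely diagonal one-parameter subgroup of $\GL_n(\mathbb{C}(t))$ that annihilates every structure constant except the one encoding $[v,v]$.

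First I would argue that since $\lambda$ is not a Lie algebra, its bracket is not skew-symmetric. In characteristic zero the polarisation identity $[x+y,x+y] = [x,x] + [x,y] + [y,x] + [y,y]$ shows that $[x,x] = 0$ for every $x$ would force $[x,y] + [y,x] = 0$, contradicting the non-Lie hypothesis. Hence there exists $v \in \lambda$ with $e_2 := [v,v] \neq 0$. Setting $e_1 := v$, the Leibniz identity applied to the triple $(e_1,e_1,e_1)$ yields
\[[e_1, e_2] = \big[e_1, [e_1, e_1]\big] = \big[[e_1, e_1], e_1\big] - \big[[e_1, e_1], e_1\big] = 0,\]
which forces $e_1$ and $e_2$ to be linearly independent: if $e_2 = c e_1$, then $0 = [e_1, e_2] = c[e_1, e_1] = c e_2$ gives $e_2 = 0$. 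I would then extend $\{e_1, e_2\}$ to a basis $\{e_1, \dots, e_n\}$ of $\lambda$; in this basis the only property of the multiplication the rest of the argument will use is that $[e_1, e_1] = e_2$ as a single basis vector.

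The degeneration is then produced by $g_t \in \GL_n\big(\mathbb{C}(t)\big)$ defined by $g_t(e_1) = t^{-1} e_1$ and $g_t(e_i) = t^{-2} e_i$ for $2 \leq i \leq n$. Writing the structure constants as $[e_i,e_j] = \sum_l c_{ij}^l e_l$, a one-line computation gives
\[(g_t * \lambda)(e_i, e_j) = \sum_l c_{ij}^l\, t^{\,w_i + w_j - w_l}\, e_l, \qquad w_1 := 1, \quad w_k := 2 \ \text{ for } k \geq 2.\]
The exponent $w_i + w_j - w_l$ is non-negative for every triple $(i,j,l)$, and vanishes precisely when $(i,j) = (1,1)$ and $l \geq 2$. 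Because $c_{11}^l = \delta_{l,2}$ in our chosen basis, passing to the limit $t \to 0$ gives $(g_t * \lambda)(e_1,e_1) \to e_2$ and $(g_t * \lambda)(e_i,e_j) \to 0$ in every other case. This is exactly the multiplication table of $\mathbf{n_2} \oplus \mathbb{C}^{n-2}$, so $\lambda \to \mathbf{n_2} \oplus \mathbb{C}^{n-2}$.

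The only real delicacy lies in the initial step of pushing a square $[v,v]$ into a second basis slot so that $[e_1,e_1]$ becomes a \emph{basis element} rather than an arbitrary linear combination; once that normalisation is in place, the weight inequalities $w_i + w_j - w_l \geq 0$ are automatic and no case analysis on the other structure constants of $\lambda$ is required.
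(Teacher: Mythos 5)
Your argument is correct. Note first that the paper itself offers no proof of this proposition: it is quoted from \cite{AOR} as a known result, so there is no internal proof to compare against. Your proof is a clean, self-contained derivation in the same spirit as the explicit diagonal families $g_t$ the paper uses for its other constructive degenerations. The two key points both check out: (i) a Leibniz algebra with $[x,x]=0$ for all $x$ is skew-symmetric by polarisation and then satisfies Jacobi (the Leibniz identity rewrites as the Jacobi identity under skew-symmetry), so a non-Lie $\lambda$ indeed admits $v$ with $[v,v]\neq 0$; and (ii) the normalisation $[e_1,e_1]=e_2$ with $e_1,e_2$ independent, obtained via $[e_1,[e_1,e_1]]=0$, makes the weight computation work, since $w_i+w_j-w_l\geq 0$ with equality only for $(i,j)=(1,1)$ and $l\geq 2$, where the only surviving structure constant is $c_{11}^2=1$. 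One could shorten the independence step slightly (if $[v,v]=cv$ then $0=[v,[v,v]]=c[v,v]=c^2v$ forces $c=0$), but as written the argument is complete and correct.
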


Consider the following algebras:
\begin{align*}
\mathbf{p_n ^ {\pm}}: \ [e_1, e_i] &= e_i,   & [e_i, e_1] &= \pm e_i, \qquad i \geq 2, &&  &&  && \\
 \mathbf{n_3^{\pm}}: \ [e_1, e_2]&= e_3,   & [e_2, e_1] &= \pm e_3. && &&  &&
\end{align*}
\begin{thm}[\cite{Gorb}]\label{T:level} Let $\lambda$ be an $n$-dimensional algebra. Then

1. If the algebra $\lambda$ is skew-commutative, then $\lev_n
(\lambda) = 1 $ if and only if it is isomorphic to $\mathbf{p_n^{-}}$ or
 to the algebra $\mathbf{n_3^{-}} \oplus \mathbf{a_{n-3}}$ ($n  \geq 3$). In
particular, the algebra $\lambda$ is a Lie algebra.

2. If the algebra $\lambda$ is commutative, then
$\lev_n(\lambda)=1$ if and only if it is isomorphic to $\mathbf{p_n^{+}}$ or
 to the algebra $\mathbf{ n_3^{+}} \oplus \mathbf{a_{n-3}}$ ($n \geq 3$). In
particular, the algebra $\lambda$ is a Jordan algebra.
\end{thm}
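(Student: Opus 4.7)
The plan is to prove both directions of the equivalence in the skew-commutative case; the commutative case is symmetric (with the sign change $- \mapsto +$ and skew-symmetric forms replaced by symmetric ones). The crucial reformulation is that $\lev_n(\lambda) = 1$ is equivalent to $\overline{\Orb(\lambda)} = \Orb(\lambda) \cup \Orb(\mathbf{a_n})$, so level-one algebras are exactly those whose orbit closure admits no nontrivial intermediate orbit.

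For the sufficiency direction, I would verify that $\mathbf{p_n^{-}}$ and $\mathbf{n_3^{-}} \oplus \mathbf{a_{n-3}}$ each admit only the trivial intermediate. For $\mathbf{n_3^{-}} \oplus \mathbf{a_{n-3}}$, upper semicontinuity of $\dim [\lambda,\lambda]$ forces any $\mu$ in the closure to satisfy $\dim [\mu,\mu] \leq 1$; a short case analysis on the rank of the associated alternating form, together with orbit-dimension bounds, leaves only $\mu \cong \mathbf{n_3^{-}} \oplus \mathbf{a_{n-3}}$ or $\mu \cong \mathbf{a_n}$. For $\mathbf{p_n^{-}}$, I would invoke Proposition~\ref{P:borel} to restrict the limit computation to the action of a Borel subgroup of $\GL_n(\mathbb{C})$, reducing the question to an explicit diagonal-plus-upper-triangular parametric analysis of $g_t * \mathbf{p_n^{-}}$.

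For the necessity direction, let $\lambda$ be skew-commutative with $\lev_n(\lambda) = 1$. I would first rule out non-Lie skew-commutative algebras by producing an explicit intermediate degeneration that exploits the failure of the Jacobi identity (the Jacobiator itself gives a nonzero alternating 3-tensor that can be scaled away while preserving a nonzero limit bracket). Then I would split according to $\dim [\lambda,\lambda]$: if $\dim [\lambda,\lambda] = 1$, the bracket is governed by an alternating form of rank $2k$, and when $k > 1$ a coordinate contraction drops the rank by two, producing an intermediate, so $k = 1$ and $\lambda \cong \mathbf{n_3^{-}} \oplus \mathbf{a_{n-3}}$; if $\dim [\lambda,\lambda] \geq 2$, analyze the nilradical. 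For nilpotent $\lambda$, produce an intermediate degeneration in the spirit of Proposition~\ref{P:n2}; for non-nilpotent $\lambda$, the semisimple action of an outer element on the nilradical must be of maximal simple type, which forces $\lambda \cong \mathbf{p_n^{-}}$. The ``in particular'' assertions that level-one skew-commutative (resp.\ commutative) algebras are Lie (resp.\ Jordan) are then read off from the classification by direct verification of the Jacobi (resp.\ Jordan) identity on the listed algebras.

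The main obstacle lies in the necessity direction when $\dim [\lambda,\lambda] \geq 2$ with $\lambda$ non-nilpotent: a large zoo of algebra structures must be excluded by exhibiting intermediate degenerations, and there is no uniform formula that handles all of them at once. The standard technique is to combine the Borel reduction of Proposition~\ref{P:borel} with one-parameter scalings of selected basis vectors, peeling off structure constants one at a time so each step produces a new orbit strictly contained in $\overline{\Orb(\lambda)}$ and distinct from $\Orb(\mathbf{a_n})$.
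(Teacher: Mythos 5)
The paper does not prove this theorem at all --- it is quoted from Gorbatsevich's paper \cite{Gorb} as a known preliminary, so there is no in-paper argument to compare yours against. More importantly, the entire point of Subsection 3.3 of this paper is that the statement you are trying to prove is incomplete as written: the commutative algebra $\mathbf{n_2}\oplus \mathbf{a_{n-2}}$, with the single product $[e_1,e_1]=e_2$, is proved there to be of level one, and it is isomorphic to neither $\mathbf{p_n^{+}}$ nor $\mathbf{n_3^{+}}\oplus \mathbf{a_{n-3}}$. Hence the ``only if'' direction of part 2 is false, and no correct proof of it can exist.

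The place where your argument would necessarily break is exactly the step you dismiss as ``symmetric with a sign change.'' In the skew-commutative case with $\dim[\lambda,\lambda]=1$ the bracket is governed by an alternating form, whose rank is always even, so the minimal nontrivial possibility is rank two, giving $\mathbf{n_3^{-}}\oplus \mathbf{a_{n-3}}$. In the commutative case the relevant form is symmetric and can have rank one; the rank-one case is precisely $\mathbf{n_2}\oplus \mathbf{a_{n-2}}$, and your rank-dropping contraction cannot reduce below rank one without killing the bracket entirely, so this algebra survives as an additional level-one algebra outside the claimed list. Had you carried out the symmetric-form case analysis directly rather than by analogy with the alternating case, you would have rediscovered the very omission this paper corrects. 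A smaller but genuine further problem: your closing sentence proposes to establish the ``in particular'' clause of part 2 by verifying the Jordan identity on the listed algebras, but the paper explicitly remarks that $\mathbf{p_n^{+}}$ is \emph{not} a Jordan algebra, so that verification would also fail.
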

\begin{rem}
We note that the algebra $\mathbf{p_n^{+}}$ is not a Jordan algebra.
\end{rem}

\section{Main results}

We divide the main  section into three subsections where we study the rigidness of solvable Leibniz algebras with rigid nilradical, describe such four-dimensional algebras with three-dimensional radical and present one algebra of level one, which was omitted in the work \cite{Gorb}.

\subsection{Rigidity of solvable Leibniz algebras with rigid nilradical}\

In this subsection we investigate the rigidity of solvable Leibniz
algebras with rigid nilradical.

\begin{defn}
The algebras whose orbits are open sets in the variety $\mathcal{L}eib_n$ with respect to the Zariski topology are said to be rigid.
\end{defn}

\begin{rem}
The notion of rigidity is characterized by absence of any degeneration coming from any algebra, that is, the orbit of the rigid algebra does not belong to the closure of the orbit of any other algebra.
\end{rem}

Let $N$ be a nilpotent Leibniz algebra. Denote by $\LR_n(N)$ the
set of all $n$-dimensional solvable Leibniz algebras whose
nilradical is $N$.

For any $m \ (1 \leq m \leq n)$ define the subset $\wedge_m \subset
\LR_n$ such that $\wedge_m = \{\lambda = (c_{i,j}^k)\}$ with
 the properties:
\begin{align*}
\sum\limits_{k_1=n-m+1}^n \sum\limits_{k_2=n-m+1}^n \dots
\sum\limits_{k_{s-1}=n-m+1}^n c_{i_1, i_2}^{k_1}c_{k_1,
i_3}^{k_3}\dots c_{k_{s-1}, i_s}^{k_s} &=0, \quad n-m+1 \leq i_1, i_2, \dots, i_s \leq n,\\
c_{i,j}^k &=0, \quad 1 \leq i, j \leq n, \ 1 \leq k \leq
n-m,
\end{align*}
where $c_{i,j}^k$ are structural constants and $s$ any fixed number.

Let us observe that $R \in \wedge_m$ if and only if $R$ contains the nilpotent
ideal $N = \langle \{e_{n-m+1}, e_{n-m+2}, \dots, e_{n}\}\rangle$ satisfying
$R^2\subseteq N$.

It is not difficult to see that $\wedge_m$ is a Zariski closed
subset of $\LR_n$, but it is not $\GL_n(\mathbb{C})$-stable.
However, the set $\wedge_m$ is $B$-stable, where $B$ is the  Borel
subgroup of $\GL_n(\mathbb{C})$ consisting of upper triangular
matrices.

\begin{prop} \label{P:dim} Let $R_1, R_2 \in \mathcal{L}R_n$ and let $R_1 \in \mathcal{L}R_n(N_1), \
R_2 \in \mathcal{L}R_n(N_2)$. If $R_1 \rightarrow R_2$, then $\dim N_2 \geq
\dim N_1$. \end{prop}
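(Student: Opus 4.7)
The plan is to exploit the $B$-stable Zariski-closed subset $\wedge_{m_1}$ together with Proposition \ref{P:borel} to transport a dimension bound on the nilradical across the degeneration. Writing $m_i = \dim N_i$, the goal becomes $m_1 \leq m_2$.

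First I would put $R_1$ into standard position inside $\wedge_{m_1}$. Since $R_1$ is solvable, its derived subalgebra $R_1^2$ is nilpotent (the Leibniz counterpart of the classical fact that the derived subalgebra of a solvable Lie algebra is nilpotent), and hence $R_1^2 \subseteq N_1$. Picking any basis $\{e_1, \dots, e_n\}$ of $R_1$ with $N_1 = \langle e_{n-m_1+1}, \dots, e_n \rangle$, the inclusion $R_1^2 \subseteq N_1$ forces the structure constants to satisfy $c_{i,j}^k = 0$ for every $k \leq n-m_1$, while the nilpotency of $N_1$ (with $s$ taken, say, larger than $n$) yields the iterated-product vanishing condition. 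Thus $R_1 \in \wedge_{m_1}$.

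Next I would apply Proposition \ref{P:borel} with $G = \GL_n(\mathbb{C})$, Borel subgroup $B$, and $X = \mathcal{L}R_n$, obtaining
\[\overline{\GL_n * R_1} = \GL_n * \overline{B * R_1}.\]
Because $\wedge_{m_1}$ is Zariski-closed and $B$-stable and contains $R_1$, one has $B * R_1 \subseteq \wedge_{m_1}$, hence $\overline{B * R_1} \subseteq \wedge_{m_1}$, and therefore $R_2 \in \overline{\GL_n * R_1} \subseteq \GL_n * \wedge_{m_1}$. Concretely, $R_2$ is isomorphic to some algebra in $\wedge_{m_1}$, and in particular carries a nilpotent ideal of dimension $m_1$. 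Since the nilradical $N_2$ is the unique maximal nilpotent ideal of $R_2$, this ideal sits inside $N_2$, so $m_2 \geq m_1$.

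The main obstacle is really just the input to Step 1, namely the verification that $R^2 \subseteq N$ for a solvable Leibniz algebra (equivalently, that the derived subalgebra of a solvable Leibniz algebra is nilpotent); once this standard fact is granted, the rest of the argument is a formal consequence of Proposition \ref{P:borel} together with the closedness and $B$-stability of $\wedge_m$ already recorded in the paper.
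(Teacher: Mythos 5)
Your proposal is correct and follows essentially the same route as the paper: place $R_1$ in $\wedge_{m_1}$ via a change of basis, use the $B$-stability and Zariski-closedness of $\wedge_{m_1}$ together with Proposition \ref{P:borel} to conclude $R_2 \in \GL_n(\mathbb{C}) * \wedge_{m_1}$, and then compare the resulting nilpotent ideal with the nilradical $N_2$. The only difference is that you spell out the verification that $R_1^2 \subseteq N_1$ (which the paper leaves implicit in its characterization of $\wedge_m$); that is a welcome but inessential addition.
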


\begin{proof} Let $\dim N_1=m$, then choose $g \in
\GL_n(\mathbb{C})$ such that $R' = g * R_1 \in \wedge_m$. Since $B
* R' \in \wedge_m$ and $\wedge_m$ is a closed set, then $\overline{B
* R'} \in \wedge_m$. By Proposition \ref{P:borel} and by condition
$R_1 \rightarrow R_2$ we conclude that $R_2 \in \GL_n(\mathbb{C}) *
\wedge_m$. Therefore, the algebra $R_2$ contains a nilpotent ideal
of dimension $m$. Since $N_2$ is the nilradical of $R_2$, we get
$\dim N_2 \geq m$.
\end{proof}

\begin{cor}\label{C:dim} Let $R_1 \in  \mathcal{L}R_n(N_1)$ and $R_2 \in  \mathcal{L}R_n(N_2)$. If $\dim N_1 =
\dim N_2$ and $R_1 \rightarrow R_2$, then $N_1 \rightarrow N_2$.
\end{cor}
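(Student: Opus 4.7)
My plan is to refine the argument used in the proof of Proposition \ref{P:dim} so as to track not merely the presence of an $m$-dimensional nilpotent ideal in the limit, but the entire Leibniz structure that ideal inherits. First I would set $m=\dim N_1=\dim N_2$ and, exactly as in Proposition \ref{P:dim}, choose $g\in\GL_n(\mathbb{C})$ with $R':=g*R_1\in\wedge_m$, so that $g$ carries $N_1$ onto the fixed subspace $V_m:=\langle e_{n-m+1},\dots,e_n\rangle$. Applying Proposition \ref{P:borel} to the degeneration $R_1\to R_2$ yields some $h\in\GL_n(\mathbb{C})$ with $h*R_2\in\overline{B*R'}$, and since $\wedge_m$ is closed and $B$-stable we obtain $h*R_2\in\wedge_m$; concretely, $h*R_2=\lim_{t\to 0}b_t*R'$ for a suitable curve $b_t\in B$.

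The crucial step, and in my view the only subtle one, is to identify $V_m$ with the nilradical of $h*R_2$. By the definition of $\wedge_m$, the subspace $V_m$ is a nilpotent ideal of $h*R_2$ of dimension exactly $m$. Any nilpotent ideal is contained in the nilradical $h*N_2$, which has dimension $\dim N_2=m$ by hypothesis, so equality of dimensions forces $V_m=h*N_2$. It is precisely here that the assumption $\dim N_1=\dim N_2$ is used essentially; without it one could only conclude $V_m\subseteq h*N_2$, which would not be enough to extract a degeneration between the nilradicals themselves.

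Finally, I would introduce the restriction map $\rho\colon\wedge_m\to\LN_m$ sending each algebra in $\wedge_m$ to the Leibniz structure it induces on $V_m$. This map is a coordinate projection on structural constants, hence continuous, and it is equivariant with respect to the restriction homomorphism $B\to\GL_m(\mathbb{C})$, $b\mapsto b|_{V_m}$, which makes sense because $B$ stabilizes $V_m$ with invertible restriction. Applying $\rho$ to the convergent family $b_t*R'\to h*R_2$ then gives
\[
(b_t|_{V_m})*(g*N_1)=\rho(b_t*R')\ \longrightarrow\ \rho(h*R_2)=h*N_2,
\]
which, since $g*N_1\cong N_1$ and $h*N_2\cong N_2$, is precisely a degeneration $N_1\to N_2$ inside $\LN_m$, completing the proof.
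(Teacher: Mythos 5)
Your argument is correct, and it rests on the same key idea as the paper's own proof: the degeneration carries $N_1$ onto a nilpotent ideal of $R_2$ whose dimension equals $\dim N_2$, so that ideal must be the nilradical $N_2$, and restricting the degeneration to the nilradicals yields $N_1\rightarrow N_2$. Where you differ is in how the restriction is made precise. The paper takes the original family $g_t$ with $\lim_{t\to0}g_t\ast R_1=R_2$ and simply writes $\lim_{t\to0}g_t\ast N_1$, leaving implicit both the convergence of the moving subspaces $g_t(N_1)$ and of the structures they carry. You instead re-run the $\wedge_m$/Borel machinery from the proof of Proposition~\ref{P:dim}, so that after conjugation the nilradical occupies the \emph{fixed} subspace $V_m$ along the whole family $b_t\ast R'$, and you pass to the limit through the continuous, equivariant coordinate projection $\rho\colon\wedge_m\to\LN_m$. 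This is a more careful implementation that justifies the step the paper glosses over, at the cost of the extra bookkeeping with $h$ and the curve $b_t$ in $B$ (whose existence follows from constructibility of $B$-orbits, just as for $\GL_n$-orbits). One caveat you inherit from the paper: for $B$ the \emph{upper} triangular Borel subgroup, $b(V_m)\subseteq V_m$ fails when $V_m=\langle e_{n-m+1},\dots,e_n\rangle$; both your equivariance of $\rho$ and the paper's claim that $\wedge_m$ is $B$-stable require the Borel subgroup preserving the flag ending in $V_m$ (lower triangular matrices). Since all Borel subgroups are conjugate, Proposition~\ref{P:borel} applies equally to that choice, so this is only a matter of convention and does not affect the validity of your proof.
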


\begin{proof} Let  $g_t$ be a family such that
$\displaystyle \lim_{t \to 0} \ g_t \ast R_1 = R_2$. By Proposition \ref{P:dim}
we have that $\displaystyle \lim_{t\to 0} \ g_t \ast N_1$ is a nilpotent
ideal of $R_2$. Therefore, we get $\dim N_1 = \dim \ (\displaystyle \lim_{t \to
0} \ g_t \ast N_1) = \dim N_2$. Since $N_2$ is the nilradical of $R_2$,
then $\displaystyle \lim_{t\to 0} \ g_t \ast N_1 = N_2$, i.e., $N_1
\rightarrow N_2$.
\end{proof}

Consider now a solvable Leibniz algebra $R$ with rigid nilradical
$N$.

\begin{prop}\label{P:R2}  Let $R^2=N$ and suppose that there exists a solvable Leibniz algebra $R_1$ such
that $R_1 \rightarrow R$. Then $R_1 \in \mathcal{L}R_n(N)$.
\end{prop}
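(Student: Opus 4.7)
The plan is to sandwich the dimension of the nilradical of $R_1$ between two bounds that force it to equal $\dim N$. Write $R_1 \in \LR_n(N_1)$ for its nilradical $N_1$. First, I would observe that since $R_1$ is a solvable Leibniz algebra, the derived subalgebra $R_1^2$ is a nilpotent two-sided ideal (the Leibniz identity makes $R_1^2$ a two-sided ideal, and the standard analogue of Lie's theorem for solvable Leibniz algebras gives nilpotency); hence $R_1^2 \subseteq N_1$ and $\dim R_1^2 \leq \dim N_1$. Proposition \ref{P:dim} applied to $R_1 \to R$ supplies the complementary bound $\dim N_1 \leq \dim N$.

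Next, I would invoke the lower semicontinuity of $\lambda \mapsto \dim \lambda^2$ on $\Leib_n$: the locus $\{\lambda : \dim \lambda^2 \leq k\}$ is cut out by the vanishing of the $(k+1)\times(k+1)$ minors of the matrix built from the structure constants, hence is Zariski-closed. Since $\dim \lambda^2$ is constant on the $\GL_n(\mathbb{C})$-orbit of $R_1$, the degeneration $R_1 \to R$ forces $\dim R^2 \leq \dim R_1^2$. Combining with the hypothesis $R^2 = N$ produces the chain
\[\dim N = \dim R^2 \leq \dim R_1^2 \leq \dim N_1 \leq \dim N,\]
so all inequalities collapse to equalities; in particular $\dim N_1 = \dim N$.

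Once $\dim N_1 = \dim N$ is established, Corollary \ref{C:dim} yields the degeneration $N_1 \to N$ inside the variety $\LN_n$ of nilpotent Leibniz algebras. Because $N$ is rigid in $\LN_n$, its orbit is open there; a standard argument (the open orbit $\Orb(N)$ meets the closed irreducible set $\overline{\Orb(N_1)}$ in a nonempty open subset, which therefore meets the dense subset $\Orb(N_1)$) forces $N_1 \cong N$, so $R_1 \in \LR_n(N)$ as required. The only nontrivial ingredient I anticipate is the semicontinuity step $\dim R^2 \leq \dim R_1^2$; the remainder is an assembly of Proposition \ref{P:dim}, Corollary \ref{C:dim}, and the rigidity of $N$ around the hypothesis $R^2 = N$.
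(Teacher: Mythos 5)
Your proof is correct and follows essentially the same route as the paper: bound $\dim N_1$ above by $\dim N$ via Proposition \ref{P:dim}, force equality using the monotonicity of $\dim(\cdot)^2$ under degeneration together with $R^2=N$, and then conclude via Corollary \ref{C:dim} and the rigidity of $N$. The only difference is cosmetic: where the paper cites a consequence of Theorem 1.4 of Grunewald--O'Halloran for the inequality $\dim R^2 \leq \dim R_1^2$, you supply the standard semicontinuity argument directly, and you phrase the case split as a collapsing chain of inequalities.
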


\begin{proof} Let $N_1$ be the nilradical of the algebra $R_1$.
Note that by the Proposition \ref{P:dim} $\dim N_1 \leq \dim N$.

If $\dim N_1 < \dim N$, then we have $\dim R_1^2 \leq \dim N_1 < \dim N =
\dim R^2$, which is a contradiction to the condition $R_1
\rightarrow R$ by a consequence of  \cite[Theorem 1.4]{GrunHall} (see also  \cite[Corollary]{AOR}).

If $\dim N_1 = \dim N$, then by  Corollary \ref{C:dim} we conclude that
$N_1 \rightarrow N$. Since $N$ is a rigid algebra, then  we get $N_1
\cong N$.
\end{proof}

\begin{cor}\label{C:RNFn}
 The algebra $\mathbf{RNF_n}$ is a rigid algebra of the variety $\mathcal{L}R_{n+1}$.
\end{cor}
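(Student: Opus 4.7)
The plan is to apply Proposition \ref{P:R2} together with the classification statement of Theorem \ref{P:NFn}. By the remark on rigidity, it suffices to show that every solvable Leibniz algebra $R_1 \in \LR_{n+1}$ with $R_1 \rightarrow \mathbf{RNF_n}$ is isomorphic to $\mathbf{RNF_n}$.

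First I would verify the hypothesis $R^2 = N$ for $R = \mathbf{RNF_n}$ and nilradical $N = \mathbf{NF_n}$. Reading off the multiplication table of $\mathbf{RNF_n}$: the brackets $[e_i, e_1] = e_{i+1}$ produce $e_2, \ldots, e_n$; the bracket $[x, e_1] = e_1$ produces $e_1$; and $[e_i, x] = -i e_i$ contributes nothing new. Since no bracket has $x$ on the right-hand side, $(\mathbf{RNF_n})^2 = \langle e_1, \ldots, e_n \rangle = \mathbf{NF_n}$, so the hypothesis of Proposition \ref{P:R2} holds. Here I also need that $\mathbf{NF_n}$ is rigid in $\LN_n$, which is furnished by the earlier remark characterizing null-filiform Leibniz algebras as the open orbit in the nilpotent variety.

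Proposition \ref{P:R2} then forces any degenerating algebra $R_1$ to lie in $\LR_{n+1}(\mathbf{NF_n})$. At this point Theorem \ref{P:NFn} finishes the job: it asserts that the set $\LR_{n+1}(\mathbf{NF_n})$ consists of a single isomorphism class, namely $\mathbf{RNF_n}$ itself. Hence $R_1 \cong \mathbf{RNF_n}$, which by the rigidity criterion means that $\mathbf{RNF_n}$ is rigid in $\LR_{n+1}$.

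I do not expect a real obstacle, since Proposition \ref{P:R2} was prepared precisely for this type of deduction. The only genuine verification is the computation $(\mathbf{RNF_n})^2 = \mathbf{NF_n}$, which is immediate from the table; everything else is a chain of citations. If anything required more care, it would be confirming that Theorem \ref{P:NFn} is being invoked with the correct dimension conventions, i.e., that the total dimension $n+1$ of $R_1$ matches the ambient variety $\LR_{n+1}$ in which rigidity is asserted.
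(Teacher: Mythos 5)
Your proposal is correct and follows exactly the route the paper intends: the corollary is stated without proof precisely because it is the combination of Proposition \ref{P:R2} (applicable since $(\mathbf{RNF_n})^2=\mathbf{NF_n}$ and $\mathbf{NF_n}$ is rigid in the nilpotent variety) with the uniqueness statement of Theorem \ref{P:NFn}, which forces any solvable algebra degenerating to $\mathbf{RNF_n}$ to be isomorphic to it. Your explicit check that $R^2=N$ and your care about the dimension shift from $\LR_{n+1}(\mathbf{NF_n})$ are exactly the verifications the paper leaves to the reader.
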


From the above, we conclude that  for a rigid nilpotent Leibniz algebra $N$ in the variety $\LN_s$ and for
 $R \in \LR_n(N)$ there are only two possibilities: $R$ is  rigid in $\LR_n$ or there
exists a rigid algebra $R_1 \in \LR_n(N)$ such that $R_1 \rightarrow
R$.

Next proposition  establishes a relationship between a solvable algebra
and its nilradical.

\begin{prop} For any solvable algebra $R$ with nilradical $N$
there exists a degeneration: $R \rightarrow N \oplus \mathbb{C}^k$,
where $k= \dim R/N$.
\end{prop}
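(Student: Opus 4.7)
The plan is to exhibit an explicit one-parameter family $g_t\in \GL_n(\mathbb{C}(t))$ whose action on $R$ shrinks every product involving a complement of the nilradical to zero as $t\to 0$, while preserving the internal multiplication of $N$.

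Concretely, fix a vector space decomposition $R=N\oplus Q$ with basis $\{e_1,\dots,e_m\}$ of $N$ and $\{x_1,\dots,x_k\}$ of $Q$, and define
\[ g_t(e_i)=e_i, \qquad g_t(x_j)=t^{-1}x_j, \]
so that $g_t^{-1}(e_i)=e_i$ and $g_t^{-1}(x_j)=t\,x_j$. Applying the action formula $(g_t*\lambda)(u,v)=g_t\bigl(\lambda(g_t^{-1}(u),g_t^{-1}(v))\bigr)$ pair-by-pair, and using that $N$ is a two-sided ideal (so that $[e_i,e_j]$, $[e_i,x_j]$, $[x_i,e_j]$ all lie in $N$ and are fixed by $g_t$), one obtains
\[ (g_t*\lambda)(e_i,e_j)=[e_i,e_j], \quad (g_t*\lambda)(e_i,x_j)=t[e_i,x_j], \quad (g_t*\lambda)(x_i,e_j)=t[x_i,e_j]. \]
The remaining products satisfy $(g_t*\lambda)(x_i,x_j)=t^2\,g_t([x_i,x_j])$; here I would invoke the standard fact (the Leibniz analogue of Lie's theorem) that $R^2\subseteq N$ for any solvable Leibniz algebra over $\mathbb{C}$, so that $[x_i,x_j]\in N$ is again fixed by $g_t$ and the product becomes $t^2[x_i,x_j]$.

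Letting $t\to 0$, every product involving at least one $x_i$ vanishes while the multiplication on $N$ is preserved. Hence $\lim_{t\to 0}g_t*R=N\oplus\mathbb{C}^k$, giving the required degeneration $R\to N\oplus\mathbb{C}^k$. The only non-mechanical ingredient is the inclusion $R^2\subseteq N$ in the solvable Leibniz setting; everything else is a direct computation from the action formula, so the main obstacle is simply to cite or verify that inclusion, after which no further case analysis is needed.
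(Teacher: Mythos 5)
Your proof is correct and is essentially the same as the paper's: the authors use exactly this family ($g_t$ scaling a complement of $N$ by $t^{-1}$ and fixing $N$) and the same three-case computation of $g_t*[e_i,e_j]$. The only difference is that you explicitly flag the needed inclusion $R^2\subseteq N$ (which indeed holds for solvable Leibniz algebras and is used elsewhere in the paper), whereas the authors' computation uses it silently when writing $g_t([e_i,e_j])=[e_i,e_j]$.
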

\begin{proof} We choose a basis $\{e_1, \dots, e_k, e_{k+1},
\dots,  e_n\}$ of $R$ such that $N = \langle  \{e_{k+1}, \dots, e_n\} \rangle$. A
degeneration is given by the family $g_t$ defined as follows:

\[
g_t(e_i)=
\begin{cases}
 t^{-1}e_i & \text{if} \quad  1\leq i \leq k, \\
e_i &\text{if} \quad k+1\leq i \leq n.
\end{cases}
\]
Indeed,
\begin{align*}
g_t \ast [e_i, e_j] &= g_t([g_t^{-1}(e_i), g_t^{-1}(e_j)]) = t^2g_t([e_i,
e_j]) =t^2[e_i, e_j] \rightarrow 0, \ \  1 \leq i, j \leq k,\\
g_t \ast [e_i, e_j]&=g_t([g_t^{-1}(e_i), g_t^{-1}(e_j)]) = tg_t([e_i, e_j]) =t[e_i, e_j]
\rightarrow 0,  \ \ 1 \leq i \leq k, \ k+1 \leq j \leq n, \\
g_t \ast [e_i, e_j]&=g_t([g_t^{-1}(e_i),g_t^{-1}(e_j)])=g_t([e_i, e_j])
=[e_i, e_j], \ \ k+1 \leq i, j \leq n. \qedhere
\end{align*}
\end{proof}

Now we present a family, which will be useful in the sequel,
\[g_t(e_1)=t^{-1}e_1, \quad g_t(e_2)=t^{-1}e_2, \quad
g_t(e_i)=t^{-i+1}e_i, \quad 3 \leq i \leq n,\] that degenerates the
algebra $\mathbf{NF_n}$ to the so-called filiform algebra \cite{Ayup}
\[\mathbf{F_n}: [e_i, e_1] = e_{i+1}, \ 2 \leq i \leq n-1.\]

\subsection{Classification of four-dimensional solvable Leibniz algebras with three-dimensional rigid nilradicals}\

In this subsection we classify four-dimensional solvable Leibniz
algebras whose nilradical is rigid and three-dimensional.

First at all, in the following proposition we describe the derivations of the three-dimensional nilpotent rigid Leibniz
algebras $\boldsymbol{ \lambda_4}(\alpha)$, $\boldsymbol{ \lambda_5}$ and $\boldsymbol{ \lambda_6}$. Recall that a derivation of a Leibniz algebra $(L,[-,-])$ is a $F$-linear map $d \colon L \to L$ such that $d[x,y]=[d(x),y]+[x,d(y)]$, for all $x,y \in L$.

\begin{prop}\label{P:der} In the algebras $\boldsymbol{ \lambda_4}(\alpha)$,
$\boldsymbol{ \lambda_5}$ and $\boldsymbol{ \lambda_6}$ there exist bases such that their derivations
have the following forms:
\[\begin{array}{rclrcl}
\Der\big( \boldsymbol{ \lambda_4}(\alpha)\big) & =&
\begin{pmatrix}
a_1 & 0 & a_3 \\
0 & b_2 & b_3 \\
0 & 0 & a_1+b_2
\end{pmatrix},
\ \alpha \neq \frac 1 4; &
\Der\big(\boldsymbol{ \lambda_4}(\frac{1}{4})\big) &=&
\begin{pmatrix}
a_1 & a_2 & a_3 \\
0 & a_1 & b_3 \\
0 & 0 & 2a_1
\end{pmatrix},\\
\Der( \boldsymbol{ \lambda_5})& = &
 \begin{pmatrix}
a_1 & 0 & a_3 \\
 0 & b_2 & b_3 \\
0 & 0 & a_1+b_2
\end{pmatrix}, &
\Der( \boldsymbol{ \lambda_6})
&=& \begin{pmatrix}
 a_1 & a_2 & a_3 \\
0 & 2a_1 & a_2 \\
 0 & 0 & 3a_1
\end{pmatrix}.
\end{array}\]
\end{prop}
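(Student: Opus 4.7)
The plan is a direct computation of the derivation algebra in each case: write a generic endomorphism as $d(e_i)=\sum_j \alpha_{ij}e_j$, impose the derivation identity
\[ d[e_i,e_j]=[d(e_i),e_j]+[e_i,d(e_j)] \]
for every ordered pair $(i,j)$, expand using the explicit brackets of $\boldsymbol{\lambda_4}(\alpha)$, $\boldsymbol{\lambda_5}$, $\boldsymbol{\lambda_6}$ (together with the immediate fact that every product involving $e_3$ vanishes in these three algebras), and solve the resulting homogeneous linear system in the nine unknowns $\alpha_{ij}$.

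For $\boldsymbol{\lambda_5}$, the two identities $d[e_1,e_1]=0$ and $d[e_2,e_2]=0$ collapse to $2\alpha_{12}e_3=0$ and $2\alpha_{21}e_3=0$, while $d[e_1,e_2]=d(e_3)$ forces $d(e_3)=(\alpha_{11}+\alpha_{22})e_3$; this already matches the stated form, and the identity $d[e_2,e_1]=d(e_3)$ is then automatic. For $\boldsymbol{\lambda_6}$ the derivation identity propagates along the chain $e_2=[e_1,e_1]$, $e_3=[e_2,e_1]$: $d(e_2)$ is forced by $d(e_1)$ to equal $2\alpha_{11}e_2+\alpha_{12}e_3$ (with $\alpha_{21}=0$), and then $d(e_3)$ is forced to equal $3\alpha_{11}e_3$, giving the asserted upper-triangular matrix with eigenvalues $a_1,2a_1,3a_1$.

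The algebra $\boldsymbol{\lambda_4}(\alpha)$ is the substantive case. The same computation in the natural basis yields a four-parameter family in which both $d(e_1)$ and $d(e_2)$ carry mixed $e_1$- and $e_2$-components, so the stated block form is \emph{not} visible in this basis; the proposition is asserting the existence of a better one. Passing to the quotient $L/\langle e_3\rangle$, any derivation acts by a $2\times 2$ matrix whose characteristic polynomial, after a short calculation, has discriminant $\alpha_{21}^{\,2}(1-4\alpha)$, and the vanishing of $1-4\alpha$ is precisely what separates the two cases of the statement.

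For $\alpha\neq 1/4$, the universal quadratic $\mu^2+\mu+\alpha=0$ has two distinct roots $\mu_\pm$, and one checks that the vectors $f_1=\mu_+ e_1+e_2$ and $f_2=\mu_- e_1+e_2$ are simultaneous eigendirections on $L/\langle e_3\rangle$ for every derivation. Setting $f_3=e_3$ and re-expressing $d$ in the basis $\{f_1,f_2,f_3\}$ yields the asserted block-diagonal form, with the identity $d(f_3)=(a_1+b_2)f_3$ forced by the derivation rule applied to $[f_1,f_2]$. For $\alpha=1/4$ the quadratic has the double root $\mu=-1/2$, the generic action on $L/\langle e_3\rangle$ is not diagonalisable, and one uses instead a Jordan basis built from the eigenvector $e_1-2e_2$; a suitable relabeling then reproduces the upper-triangular matrix with repeated eigenvalue $a_1$. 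The only real obstacle is this case split: verifying that the basis change works \emph{simultaneously} for every derivation, and separately handling the defective case $\alpha=1/4$. Everything else is linear algebra.
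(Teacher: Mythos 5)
Your proposal is correct, and for $\boldsymbol{\lambda_5}$ and $\boldsymbol{\lambda_6}$ it coincides with the paper's direct verification. For $\boldsymbol{\lambda_4}(\alpha)$ you reverse the paper's order of operations. The paper first normalizes the \emph{algebra}: it changes basis by $f_1=e_1$, $f_2=e_2+\beta e_1$ with $\beta=-\tfrac{1+\sqrt{1-4\alpha}}{2}$ a root of $\beta^2+\beta+\alpha=0$, performs one further substitution to reach the brackets $[f_2,f_1]=f_3$, $[f_1,f_2]=\beta' f_3$ (respectively the normal form of $\boldsymbol{\lambda_4}(\tfrac14)$ when $\beta=-\tfrac12$), and only then computes $\Der$ in the new basis. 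You instead compute $\Der$ in the original basis, obtaining the four-parameter family $d(e_1)=ae_1+be_2+ce_3$, $d(e_2)=-b\alpha e_1+(a+b)e_2+re_3$, $d(e_3)=(2a+b)e_3$, and then simultaneously diagonalize (or, for $\alpha=\tfrac14$, Jordanize) the induced action on $L/\langle e_3\rangle$. The two routes land on literally the same basis, since your common eigendirections $\mu_{\pm}e_1+e_2$ come from the same quadratic $\mu^2+\mu+\alpha=0$ and the same discriminant $b^2(1-4\alpha)$ that drives the paper's substitution; your version has the merit of explaining \emph{why} that particular basis works. Two points you should still make explicit to match the proposition as stated: (i) the reverse inclusion, namely that every matrix of the displayed shape \emph{is} a derivation --- in your setup this follows from a dimension count (your computed derivation algebra and the displayed family are both $4$-dimensional, respectively $3$-dimensional for $\boldsymbol{\lambda_6}$), whereas the paper gets it for free by recomputing $\Der$ from the normalized bracket; and (ii) the new structure constants themselves, which your argument never writes down but which the paper records and reuses in the subsequent classification theorem and in the nonexistence result for nilradical $\boldsymbol{\lambda_4}(\tfrac14)$.
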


\begin{proof} Taking the following change of basis  in the algebra $\boldsymbol{ \lambda_4}(\alpha)$: \[f_1 = e_1, \qquad f_2= e_2 + \beta e_1, \qquad
f_3 = e_3,\] with $\beta = -\frac{1+\sqrt{1-4\alpha}}{2}$, we
deduce that the multiplications of $\boldsymbol{ \lambda_4}(\alpha)$ becomes of the form:
\[ [f_1, f_1] = f_3, \quad [f_2, f_1] =\beta f_3,\quad [f_1, f_2] = (1 + \beta)f_3.\]

If $\beta\neq - \frac{1}{2}$ (i.e., $\alpha\neq \frac{1}{4}$), then
setting $f'_1 = f_1 - \frac{1}{2\beta+1}f_2, \ f_2' =
\frac{1}{\beta}f_2$, we derive
\begin{equation}\label{E:beta}
[f_2, f_1] = f_3, \quad [f_1, f_2] = \beta' f_3,
\end{equation}
where $\beta' = \frac{\sqrt{1-4\alpha}-1}{\sqrt{1-4\alpha}+1}$.

If $\beta = -\frac{1}{2}$ (i.e., $\alpha=\frac{1}{4}$), then
putting $f_2' = -2f_2$, we get
\begin{equation}\label{E:lambda}
 \boldsymbol{ \lambda_4}\Big(\frac{1}{4}\Big): [f_1, f_1]
= f_3, \quad [f_2, f_1] = f_3, \quad [f_1, f_2] = - f_3.
\end{equation}

By checking the derivation property for algebras \eqref{E:beta} and \eqref{E:lambda}
we obtain
\[\Der\big(\boldsymbol{ \lambda_4}(\alpha)\big) = \begin{pmatrix}
a_1 & 0 & a_3 \\
0 & b_2 & b_3 \\
0 & 0 & a_1+b_2
\end{pmatrix},  \ \alpha \neq \frac 1 4; \quad \Der\big(\boldsymbol{ \lambda_4}\big(\frac {1} {4}\big)\big) = \begin{pmatrix}
a_1 & a_2 & a_3 \\
0 & a_1 & b_3 \\
0 & 0 & 2a_1
\end{pmatrix}.\]

The derivations of the algebras $\boldsymbol{ \lambda_5}$ and $\boldsymbol{ \lambda_6}$ are obtained directly
applying the derivation property.
\end{proof}

Below, we prove that there do not exist four-dimensional solvable Leibniz algebras with
nilradical $\boldsymbol{ \lambda_4}(\frac{1} {4})$.

\begin{prop} There are not  four-dimensional solvable Leibniz
algebras with three-dimensional nilradical  $\boldsymbol{ \lambda_4}(\frac {1}
{4})$.
\end{prop}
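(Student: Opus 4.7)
The plan is to first use the derivation structure of $N := \boldsymbol{\lambda_4}(\tfrac14)$ to bound $\dim R$, and then, in the only remaining dimension, extract a contradiction directly from the Leibniz identity.

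From Proposition \ref{P:der}, every derivation of $N$ has diagonal entries $(a_1, a_1, 2a_1)$, so it is nilpotent precisely when $a_1 = 0$. Hence any two derivations have a nontrivial linear combination that is nilpotent, and the maximal number of nil-independent derivations of $N$ equals one. Invoking the principle recalled just before Theorem \ref{P:NFn} that $\dim R/N$ is bounded by the number of nil-independent derivations of $N$, we get $\dim R/N \leq 1$. Thus a putative four-dimensional extension must take the form $R = N \oplus \langle x \rangle$ with a single outer generator $x$; after rescaling $x$, I may further assume that $R_x|_N$ corresponds to $a_1 = 1$, which fixes the products $[e_i, x]$ up to the free parameters $a_2, a_3, b_3$ appearing in Proposition \ref{P:der}.

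Since the derived subalgebra of a solvable Leibniz algebra is nilpotent and hence contained in the nilradical, the remaining products $[x, e_1]$, $[x, e_2]$, $[x, e_3]$ and $[x, x]$ all lie in $N$; I introduce general unknowns for their coordinates in $\{e_1, e_2, e_3\}$. The argument is then driven by the Leibniz identity $[x, [y, z]] = [[x, y], z] - [[x, z], y]$ applied for well-chosen $y, z \in \{e_1, e_2\}$. Taking $y = z = e_1$ immediately forces $[x, e_3] = 0$; combining this with $(y, z) = (e_2, e_1)$ then produces a single linear relation among the $e_1$ and $e_2$ components of $[x, e_1]$ and $[x, e_2]$, coming from the specific configuration $[e_2, e_1] = e_3$ and $[e_1, e_2] = -e_3$ of $\boldsymbol{\lambda_4}(\tfrac14)$. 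On the other hand, three symmetric Leibniz identities obtained by moving $x$ into the inner position, namely those arising from $[e_2, [x, e_1]]$, $[e_2, [x, e_2]]$, and $[e_1, [x, e_2]]$, independently pin down the individual values of exactly those components to specific numerical values. Substituting these into the first relation produces a numerical contradiction.

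The main obstacle is ensuring that this contradiction is not an artefact of a bad normalisation: once $a_1 = 1$ is fixed, the relations derived above are rigid, and every other unknown either cancels or is determined. The incompatibility is genuinely intrinsic to the case $\alpha = \tfrac14$, tied to the fact that $[e_1, e_1] = e_3$ together with $[e_1, e_2] + [e_2, e_1] = 0$ leaves no slack to absorb the factor of $2$ forced by $[e_3, x] = 2 e_3$. For the other rigid three-dimensional nilpotents $\boldsymbol{\lambda_4}(\alpha)$ with $\alpha \neq \tfrac14$, $\boldsymbol{\lambda_5}$, and $\boldsymbol{\lambda_6}$, Proposition \ref{P:der} shows that the derivation algebra has richer diagonal behaviour, and the analogous calculation has enough freedom to avoid the obstruction.
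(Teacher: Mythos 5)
Your proof is correct and follows essentially the same route as the paper: normalize $\mathcal{R}_{x}\vert_N$ to $a_1=1$ via Proposition \ref{P:der} and extract a numerical contradiction from the Leibniz identity applied to products of $x$ with the generators of $\boldsymbol{\lambda_4}(\frac14)$. The only difference is cosmetic: the paper first observes that $f_3=[f_1,f_1]$ lies in the right annihilator, which forces $[x,f_i]\equiv -[f_i,x]$ modulo $f_3$, so that the single identity $[x,[f_2,f_1]]=0$ immediately gives $-2f_3=0$; you instead leave the components of $[x,e_1]$ and $[x,e_2]$ as unknowns in $N$ and pin them down ($c_1=-1$, $d_1=0$, $d_2=-1$) with three further Leibniz identities before substituting into $c_1+d_1+d_2=0$ to reach the same $-2=0$.
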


\begin{proof} Let us assume the contrary. Let $R \in \LR_4\big(\boldsymbol{ \lambda_4}
(\frac 1 4)\big)$. We choose a basis $\{x, f_1, f_2, f_3\}$ of $R$
such that $\{f_1, f_2, f_3\} $ is the basis of $\boldsymbol{ \lambda_4}(\frac {1}
{4})$ chosen in the proof of Proposition \ref{P:der}. Since the
algebra $R$ is non-nilpotent, the restriction of the
right multiplication operator  $\mathcal{R}_{x}$ to $\boldsymbol{ \lambda_4}(\frac{1}{4})$ is a non-nilpotent derivation of $\boldsymbol{ \lambda_4}(\frac{1}{4})$. Then using the
form of this derivation from Proposition \ref{P:der} we have
\begin{align*}
[f_1, x] &= a_1f_1 + a_2f_2 + a_3f_3, & [f_2, x]& = a_1f_2 +
b_3f_3,& [f_3, x]& = 2a_1f_3,\\
[f_1, f_1] &= f_3, & [f_2, f_1] &= f_3, & [f_1, f_2]& = -
f_3.
\end{align*}
Since $\mathcal{R}_{x \mid \boldsymbol{\lambda_4}}$ is non-nilpotent, we can suppose $a_1 =
1$. It is easy to see that the right annihilator of the algebra
$R$ only consists  of $\{f_3\}$. Therefore,
\begin{align*}
[f_1, x] &= f_1 + a_2f_2 + a_3f_3, & [f_2, x] &= f_2 +
b_3f_3,& [f_3, x] &= 2f_3,\\
[x, f_1] &= - f_1 - a_2f_2 + \alpha_3f_3, & [x, f_2] &= - f_2 +
\beta_3f_3,& [x, x] &= \gamma_3f_3,\\
[f_1, f_1] &= f_3, & [f_2, f_1] &= f_3, & [f_1, f_2] &= - f_3.
\end{align*}
Considering the Leibniz identity
\begin{align*}
0 &= \big[x, [f_2, f_1]\big] = \big[[x,
f_2], f_1\big] - \big[[x, f_1], f_2\big]  \\
 {}& =[- f_2 + \beta_3f_3, f_1] -
[- f_1 - a_2f_2 + \alpha_3f_3, f_2] = - f_3 - f_3 = -2 f_3,
\end{align*}
we have a contradiction with the assumption.
\end{proof}

The following theorem gives the classification of four-dimensional
solvable Leibniz algebras with three-dimensional rigid
nilradicals.

\begin{thm} Up to isomorphism, there exist three four-dimensional solvable Leibniz algebras with
three-dimensional rigid nilradicals. Namely,
\[\begin{array}{rl} \mathbf {R_1^4}: & \left\{\begin{array}{rclrclrclrcl}[e_2, e_1] &=& e_3, & [e_1, e_2] &=&\beta
e_3,& [x,e_1]&=&- e_1, & [x, e_2]&=& -\beta e_2,\\[1mm]
[e_1, x] &=& e_1, & [e_2, x] &=&\beta e_2, & [e_3, x] &=& (\beta +
1)e_3,& & &  \end{array} \right.\\
 & where\ \beta =
\frac{\sqrt{1-4\alpha}-1}{\sqrt{1-4\alpha}+1}\ for\ \alpha\neq 0,
\frac{1}{4}; \\

\mathbf {R_2^4}: &  \left\{\begin{array}{rclrclrclrcl}[e_2, e_1] &=& e_3, & [e_1, e_2] &=& e_3,& [x,e_1]&=&- e_1, & [x, e_2]&=& -e_2,\\[1mm]
[e_1, x] &=& e_1, & [e_2, x]& =& e_2, & [e_3, x] &=& 2e_3\,;& & &
\end{array} \right.\\

\mathbf {R_3^4}:&  \left\{\begin{array}{rclrclrcl}[e_1, e_1] &=& e_2, & [e_2, e_1] &=&e_3,& [x,e_1]&=&- e_1, \\[1mm]
[e_1, x] &=& e_1, & [e_2, x] &=& 2e_2, & [e_3, x] &=& 3e_3.
\end{array} \right. \end{array}\]
\end{thm}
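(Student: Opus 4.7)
The plan is to split the argument into three cases according to the isomorphism type of the rigid three-dimensional nilradical $N$ of $R$: either $N\cong\boldsymbol{\lambda_4}(\alpha)$ with $\alpha\neq 0,\frac{1}{4}$ (the value $\alpha=\frac{1}{4}$ having been excluded by the previous proposition), or $N\cong \boldsymbol{\lambda_5}$, or $N\cong\boldsymbol{\lambda_6}$. In each case I fix the adapted basis $\{f_1,f_2,f_3\}$ of $N$ employed in the proof of Proposition \ref{P:der}, so that the derivations of $N$ take the explicit matrix shapes displayed there, and I extend to a basis $\{x,f_1,f_2,f_3\}$ of $R$.

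Since $R$ is non-nilpotent solvable with nilradical $N$, the right multiplication $\mathcal{R}_x$ restricts to a non-nilpotent derivation of $N$, hence its matrix is of the form given by Proposition \ref{P:der} with at least one nonzero diagonal entry. I first put this derivation in normal form: the replacement $x\mapsto x+n$ with $n\in N$ modifies $\mathcal{R}_x|_N$ by the (nilpotent) inner derivation $\mathcal{R}_n|_N$, which suffices to kill the third-column parameters $a_3,b_3$ in the $\boldsymbol{\lambda_4}(\alpha)$ and $\boldsymbol{\lambda_5}$ cases and the strictly lower-triangular entries in the $\boldsymbol{\lambda_6}$ case, while rescaling $x$ normalizes one remaining diagonal coefficient to $1$. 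I then introduce unknown coefficients for the remaining products $[x,f_i]$ and $[x,x]$, observing that $[x,x]$ automatically lies in the right annihilator of $R$ via the standard Leibniz consequence $[z,[y,y]]=0$. Applying the Leibniz identity to the triples $(x,f_i,f_j)$, $(f_i,x,f_j)$, $(f_i,f_j,x)$, $(x,x,f_i)$ and $(x,f_i,x)$ yields a linear system whose unique solution, after a final cosmetic change of basis inside $N$ and the relabelling $f_i\mapsto e_i$, reproduces exactly one of the tables $\mathbf{R_1^4},\mathbf{R_2^4},\mathbf{R_3^4}$; in particular, in the $\boldsymbol{\lambda_5}$ subcase the symmetry $[f_1,f_2]=[f_2,f_1]=f_3$ forces the two diagonal entries of $\mathcal{R}_x|_N$ to coincide, which is exactly why $\mathbf{R_2^4}$ appears as a single algebra rather than as a one-parameter family.

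The principal technical obstacle will be the $\boldsymbol{\lambda_6}$ case: the derivation algebra there carries three parameters, the off-diagonal parameter $a_2$ is tied in two slots, and the multiplication in $N$ is the least symmetric of the three, so both the normalization step and the resulting Leibniz system require more careful bookkeeping than in the other two cases. One must also check that the three algebras are pairwise non-isomorphic, which is immediate from the non-isomorphism of their nilradicals, and that within the family $\mathbf{R_1^4}$ the parameter $\beta$ is a genuine invariant up to the symmetry $\beta\leftrightarrow 1/\beta$ induced by the swap $e_1\leftrightarrow e_2$ (combined with the rescaling $e_3\mapsto\beta e_3$); this is the same invariant that already distinguishes the algebras $\boldsymbol{\lambda_4}(\alpha)$ in \cite{AOR}.
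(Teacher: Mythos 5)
Your overall strategy coincides with the paper's for the nilradicals $\boldsymbol{\lambda_4}(\alpha)$ and $\boldsymbol{\lambda_5}$: fix the adapted basis from Proposition \ref{P:der}, use that $\mathcal{R}_x$ restricts to a non-nilpotent derivation of the nilradical, introduce unknowns for the remaining products, and grind through the Leibniz identities (the paper reaches the constraint $b_2=a_1\beta$, resp.\ $b_2=a_1$, from $\big[x,[f_2,f_1]\big]=0$, exactly as your ``unique solution of the Leibniz system'' predicts; it also uses the right annihilator $\langle f_3\rangle$ to relate $[x,f_i]$ to $[f_i,x]$, which is the same shortcut you extract from $[z,[y,y]]=0$). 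The one genuine divergence is the case you single out as the principal technical obstacle: for $N\cong\boldsymbol{\lambda_6}$ the paper does no computation at all, because $\boldsymbol{\lambda_6}$ is precisely the null-filiform algebra $\mathbf{NF_3}$, so Theorem \ref{P:NFn} (the classification of solvable Leibniz algebras with nilradical $\mathbf{NF_n}$ from \cite{Cas1}) immediately yields $\mathbf{R_3^4}=\mathbf{RNF_3}$. Your direct route would also work, but be aware that your normalization step is slightly off there: the inner derivations of $\boldsymbol{\lambda_6}$ are spanned by $\mathcal{R}_{e_1}$ alone ($\mathcal{R}_{e_2}=\mathcal{R}_{e_3}=0$), which realizes only the $a_2$-direction of $\Der(\boldsymbol{\lambda_6})$; the parameter $a_3$ (and there are no strictly lower-triangular entries to kill --- the matrix in Proposition \ref{P:der} is upper triangular) must instead be removed by a change of basis inside $N$ or absorbed after the Leibniz identities, so the bookkeeping is a bit heavier than ``add $x\mapsto x+n$''. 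Your closing remarks on non-isomorphism and on the symmetry $\beta\leftrightarrow 1/\beta$ within $\mathbf{R_1^4}$ go beyond what the paper writes down and are a worthwhile addition; what the paper's route buys is economy in the hardest case, at the price of importing Theorem \ref{P:NFn} as a black box.
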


\begin{proof}
 Here we shall use the form of the algebra
$\boldsymbol{ \lambda_4}(\alpha)$ as in the proof of Proposition
\ref{P:der} after the change of basis, i.e., the form $\boldsymbol{ \lambda_4}(\beta)$. Consider the class
$\LR_4\big(\boldsymbol{ \lambda_4}(\beta)\big)$. Due to Proposition \ref{P:der}, we can
choose a basis $\{x, f_1, f_2, f_3\}$ of the algebra of
$\LR_4\big(\boldsymbol{ \lambda_4}(\beta)\big)$ such that $\mathcal{R}_{x \mid \boldsymbol{ \lambda_4}(\beta)}$ is a
non-nilpotent derivation of $\boldsymbol{ \lambda_4}(\beta)$. Therefore, in the
algebra of $\LR_4\big(\boldsymbol{ \lambda_4}(\beta)\big)$ we have the following products:
\[\begin{array}{rclrclrcl} [f_2, f_1] &=& f_3, &  [f_1, f_2]& =& \beta f_3, & & &\\[1mm]
[f_1, x] &=& a_1f_1 + a_3f_3, & [f_2, x] &=& b_2f_2 + b_3f_3, & [f_3, x]
&=& (a_1 + b_2)f_3.\end{array}\]

It is easy to see that the right annihilator of the algebra consists
of $\{f_3\}$. Hence we get
\[\begin{array}{rclrclrcl} [f_1, x]& = &a_1f_1 + a_3f_3, & [f_2, x]& =& b_2f_2 + b_3f_3, & [f_3, x]
&=& (a_1 + b_2)f_3,\\[1mm]
[x, f_1] &=& - a_1f_1 + \alpha_3f_3, & [x, f_2] &=& - b_2f_2 +
\beta_3f_3, &  [x, x] &=& \gamma_3f_3,\\[1mm]
[f_2, f_1] &=& f_3, &  [f_1, f_2] &=& \beta f_3. & &  &
\end{array}\]

Applying the Leibniz identity
\begin{align*}
0&= \big[x, [f_2, f_1]\big] = \big[[x, f_2], f_1\big] - \big[[x, f_1], f_2\big]  \\
{}&=[-b_2f_2 + \beta_3f_3, f_1] - [-a_1f_1 + \alpha_3f_3, f_2] = -b_2f_3
+ a_1\beta f_3,
\end{align*}
we derive $b_2 = a_1\beta$.

Since $\mathcal{R}_{x \mid \boldsymbol{\lambda_4}(\beta)}$ is
non-nilpotent, we have $a_1 = b_2\neq 0$. Consequently, we can assume $a_1=1, \ b_2 =\beta$.

Taking the change of basis:
\[e_1 = f_1 - \frac{a_3}{\beta}f_3, \quad e_2=
f_2 - b_3f_3, \quad e_3 = f_3,  \quad x' = x - \frac{\gamma_3}{\beta+1}f_3,\] we can
suppose that $a_3 = b_3 = \gamma_3 =0$ and the
multiplication  table has the form

\[\begin{array}{rclrclrcl} [e_1, x] &=& e_1, & [e_2, x] &=& \beta e_2, & [e_3, x]
&=& (1 + \beta)e_3,\\[1mm]
[x, e_1] &=& - e_1 + \alpha_3e_3, & [x, e_2] &=& - \beta e_2 +
\beta_3e_3, & & &  \\[1mm]
[e_2, e_1] &=& e_3, &  [e_1, e_2] &=& \beta e_3. & & &
\end{array}\]

Consider the chain of equalities
\[\big[x, [e_1, x]\big] = \big[[x, e_1], x\big] - \big[[x, x], e_1\big] = [- e_1 +
\alpha_3e_3, x] = - e_1 + \alpha_3 (1 + \beta)e_3.\]

On the other hand, $\big[x, [e_1, x]\big] = [x, e_1] = - e_1 +
\alpha_3e_3$.

Comparing the coefficients at the basis elements, we obtain
$\alpha_3\beta =0$ which implies $\alpha_3 =0$.

Similarly, from
\begin{align*}
\big[x, [e_2, x]\big] &= \big[[x, e_2], x\big] - \big[[x, x], e_2\big] = [-\beta e_2 +
\beta_3e_3, x] = - \beta^2e_2 + \beta_3(1+ \beta)e_3,\\
\big[x, [e_2, x]\big] &= [x, \beta e_2] = - \beta^2e_2 + \beta \beta_3e_3,
\end{align*}
we deduce $\beta_3 =0$. Thus the algebra $\mathbf {R_1^4}$ is
obtained.

Applying the above arguments for the class $\LR_4(\boldsymbol{ \lambda_5})$ we
derive the  multiplication table:
\[\begin{array}{rclrclrcl} [f_1, x] &=& a_1f_1 + a_3f_3, & [f_2, x] &=& b_2f_2 + b_3f_3, & [f_3, x]
&=& (a_1 + b_2)f_3,\\[1mm]
[x, f_1] &=& - a_1f_1 + \alpha_3f_3, & [x, f_2] &=& - b_2f_2 +
\beta_3f_3, &  [x, x] &=& \gamma_3f_3, \\[1mm]
[f_2, f_1] &=& f_3, &  [f_1, f_2] &=& f_3.  & &&
\end{array}\]

From the chain of equalities
\begin{align*}
0 &= \big[x, [f_2, f_1]\big] = \big[[x, f_2], f_1\big] - \big[[x, f_1], f_2\big] \\
{} & = [- b_2f_2 + \beta_3f_3, f_1] - [-a_1f_1 + \alpha_3f_3, f_2] = -b_2f_3
+a_1f_3,
\end{align*}
we have $b_2 = a_1$.

Since the restriction of the right multiplication operator  on the
element $x$ to $\boldsymbol{ \lambda_5}$ is non-nilpotent, we have $a_1 =b_2\neq
0$ and without loss of generality we can suppose $a_1= b_2 =1$.

Taking the change of basis  \[e_1 = f_1 -a_3f_3, \quad e_2= f_2 - b_3f_3, \quad e_3 = f_3,
\quad x' = x - \frac{\gamma_3}{2}f_3,\] we can suppose that $a_3 =
b_3 = \gamma_3 =0$ and the  multiplication table has the form

\[\begin{array}{rclrclrcl} [e_1, x] &=& e_1, & [e_2, x] &=& e_2, & [e_3, x]
&=& 2e_3,\\[1mm]
[x, e_1] &=& - e_1 + \alpha_3e_3, & [x, e_2]& =& - e_2 +
\beta_3e_3, & & & \\[1mm]
[e_2, e_1] &= &e_3, &  [e_1, e_2] &= &e_3. & & &
\end{array}\]

Applying the Leibniz identity to the brackets $\big[x,[x,e_1]\big]$ and $\big[x,[e_1,x]\big]$ with respect to the above multiplication, we derive that
$\alpha_3 =\beta_3 =0$. Thus, we obtain the algebra $\mathbf{R_2^4}$.

Since an algebra of $\LR_4(\boldsymbol{\lambda}_6)$ is nothing else but the algebra
$\mathbf{RNF_3}$, the algebra $\mathbf{R_3^4}$ is directly followed from Theorem
\ref{P:NFn}.
\end{proof}

It should be noted that thanks to Proposition \ref{P:R2} and Corollary \ref{C:RNFn} the algebras
$\mathbf{R_1^4}, \mathbf{R_2^4}$ and $\mathbf{R_3^4}$ are rigid in the variety $\LR_4$.

The following theorem assert that the Conjecture is true for dimensions
less than six.

\begin{thm}  Any complex nilpotent Lie algebra of dimension less than
six is not rigid in $\mathcal{L}R_n$. \end{thm}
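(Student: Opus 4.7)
I first reduce to the nilpotent Lie algebras that are rigid within the nilpotent subvariety. By the result of Grunewald and O'Halloran cited above, the only nilpotent complex Lie algebras of dimension less than six that are rigid inside the variety of nilpotent Lie algebras are $\mathbf{n_3}$, $\mathbf{n_4}$, and $\mathbf{n_5}$. If $N$ is any other nilpotent Lie algebra of dimension $n<6$ then, by definition of non-rigidity in the nilpotent variety, $N$ lies in the orbit closure of some non-isomorphic nilpotent Lie algebra $N'$ of the same dimension; since $N' \in \mathcal{L}R_n$, this immediately witnesses non-rigidity of $N$ in $\mathcal{L}R_n$. The proof thus reduces to producing, for each $k \in \{3, 4, 5\}$, a non-isomorphic solvable Leibniz algebra $L_k \in \mathcal{L}R_k$ together with a family $g_t \in \GL_k(\mathbb{C}(t))$ realizing the degeneration $L_k \to \mathbf{n_k}$.

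For $k = 3, 4$ I would take $L_k$ to be $\mathbf{n_k}$ enlarged by the Lie brackets $[e_1, e_k] = e_k$ and $[e_k, e_1] = -e_k$. The Jacobi identity is immediate because $e_k$ is central in $\mathbf{n_k}$ and every potentially problematic triple in $L_k$ evaluates to $[\pm e_k, e_j] = 0$; hence $L_k$ is a non-nilpotent solvable Lie algebra of dimension $k$. A diagonal family $g_t(e_i) = t^{-w_i} e_i$ with filiform weights $(w_1, w_2, w_3) = (1, 1, 2)$ for $k = 3$ and $(1, 1, 2, 3)$ for $k = 4$ then realizes $L_k \to \mathbf{n_k}$: the chain brackets of $\mathbf{n_k}$ carry exponent $w_i + w_j - w_k = 0$ and survive, while the extra bracket $[e_1, e_k] = e_k$ carries the strictly positive exponent $w_1 + w_k - w_k = w_1 = 1$ and vanishes in the limit.

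The case $k = 5$ is the heart of the theorem. The naive extension $L_5 = \mathbf{n_5}$ enlarged by $[e_1, e_5] = e_5$ violates the Jacobi identity because of the extra bracket $[e_2, e_3] = e_5$ present in $\mathbf{n_5}$: indeed the cyclic sum $[[e_1, e_2], e_3] + [[e_2, e_3], e_1] + [[e_3, e_1], e_2]$ evaluates to $0 + (-e_5) + 0 = -e_5 \ne 0$. To circumvent this obstruction, I would instead construct $L_5$ as a five-dimensional non-nilpotent solvable algebra whose nilradical is $\mathbf{n_4}$ (rather than $\mathbf{n_5}$) extended by a suitable non-inner derivation, and take $g_t$ to combine a diagonal scaling on the filiform basis with an off-diagonal shear that sends the derivation element to the top of the $\mathbf{n_4}$-chain. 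The shear is tuned so that the bracket $[e_2, e_3] = e_5$ appears in the limit out of the derivation's interaction with the chain, while the non-nilpotent eigenvalue brackets of the derivation carry strictly positive exponents and scale to zero. The main technical obstacle is precisely this balancing act: the filiform grading of $\mathbf{n_5}$ forces the rigid relation $w_2 = 2 w_1$ among the weights, which together with the requirement that both $[e_1, e_4] = e_5$ and $[e_2, e_3] = e_5$ appear in the limit makes the admissible shear essentially unique, so one must choose the derivation of $\mathbf{n_4}$ precisely to fit this constraint.
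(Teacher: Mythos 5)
Your reduction to the three Grunewald--O'Halloran rigid algebras $\mathbf{n_3}$, $\mathbf{n_4}$, $\mathbf{n_5}$ is the same one the paper relies on implicitly, and your constructions for $k=3,4$ are correct and complete: adjoining $[e_1,e_k]=e_k=-[e_k,e_1]$ to the filiform algebra $\mathbf{n_k}$ does satisfy the Jacobi identity (the only triples that acquire new terms are $(e_1,e_1,e_{k-1})$, where $[e_k,e_1]+[-e_k,e_1]=0$, and $(e_1,x,e_k)$, where every summand vanishes), and the diagonal family with filiform weights kills the extra bracket while preserving the chain. These algebras differ from the paper's $\mathbf{r_3}$ and $\mathbf{r_4}$ (yours extend the centre by a semisimple action, the paper's carry a Jordan block on the nilradical), but either choice works for these two cases.

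The genuine gap is the case of $\mathbf{n_5}$, which you yourself call the heart of the theorem and then leave as a programme rather than a proof. You correctly diagnose why the naive extension fails Jacobi (because of $[e_2,e_3]=e_5$), but you never exhibit the algebra $L_5$, never write down the family $g_t$, and explicitly flag the ``balancing act'' between the filiform weight constraints and the bracket $[e_2,e_3]=e_5$ as an unresolved technical obstacle. An existence claim of the form ``one must choose the derivation of $\mathbf{n_4}$ precisely to fit this constraint'' is not a construction, and it is not obvious that a suitable extension of $\mathbf{n_4}$ exists at all. The paper closes this case by writing down the explicit solvable Lie algebra $\mathbf{r_5}$ with brackets $[e_1,e_2]=e_3$, $[e_1,e_3]=e_2$, $[e_1,e_4]=e_5$, $[e_2,e_3]=e_5$ --- whose nilradical $\langle e_2,e_3,e_4,e_5\rangle$ is a Heisenberg algebra plus an abelian line, not the $\mathbf{n_4}$ you propose --- together with an explicit non-diagonal family $g_t$ that mixes $e_2$ with $e_4$ and $e_3$ with $e_5$. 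Without an analogous explicit algebra and a verified limit, your argument does not establish the theorem for $\mathbf{n_5}$, and hence does not prove the statement.
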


\begin{proof} All solvable Lie algebras of dimension less than six have  the following
 multiplication tables \cite{GK}:
\begin{align*}
 \mathbf{r_3}: \ &  [e_1,e_2]=-[e_2,e_1]=e_1+e_3,  \quad [e_3,e_2]=-[e_2,e_3]=e_3,\\
\mathbf{r_4}: \ & [e_1,e_2]=-[e_2,e_1]=e_1+e_3, \quad [e_1,e_3]=-[e_3,e_1]=e_4, \quad  [e_2,e_3]=-[e_2,e_3]=e_3,\\
\mathbf{r_5}: \ & [e_1,e_2]=-[e_2,e_1]=e_3, \quad  [e_1,e_3]=-[e_3,e_1]=e_2, \\ & [e_1,e_4]=-[e_4,e_1]=e_5,
\quad [e_2,e_3]=-[e_3,e_2]=e_5.
\end{align*}

It is easy to check that

$\mathbf{r_3} \rightarrow \mathbf{n_3}$ via the family $g_t$ defined as follows
\[g_t(e_1)=t^{-1}e_1, \quad g_t(e_2)=t^{-1}e_2, \quad
g_t(e_3)=t^{-2}e_3,\]

$\mathbf{r_4} \rightarrow \mathbf{n_4}$ via the family $g_t$ defined as
\[g_t(e_1)=t^{-1}e_1, \quad  g_t(e_2)=t^{-1}e_2, \quad g_t(e_3)=t^{-2}e_3, \quad
g_t(e_4)=t^{-3}e_4,\]

$\mathbf{r_5} \rightarrow \mathbf{n_5}$ via the family $g_t$ defined as
\begin{align*}
g_t(e_1)& =t^{-1}e_1, \qquad      \; \; \; \quad  g_t(e_2)=t^{-3}e_4, \qquad  g_t(e_3)=t^{-4}e_5,\\
 g_t(e_4)& =-e_2+t^{-2}e_4,  \quad g_t(e_5)=-t^{-1}e_3+t^{-3}e_5. \qedhere
\end{align*}
\end{proof}

\begin{rem} \label{R:Rn}
Consider the following $n$-dimensional solvable Leibniz algebra
\[\mathbf{R_n}: \ [e_1, e_1] = e_2, \quad [e_i, e_1] = e_i + e_{i+1}, \quad 2 \leq i \leq n-1.\]

It is known that the algebra $\mathbf{NF_n}$ is rigid in the variety
of nilpotent Leibniz algebras \cite{Ayup}. However, this algebra it is not
rigid in the variety of solvable Leibniz algebras. Indeed, the
family of basis transformations
\[g_t(e_i)=t^{-i}e_i, \qquad 1 \leq i \leq n,\]
degenerates the algebra $\mathbf{R_n}$ to $\mathbf{NF_n}$.
\end{rem}

Now we present a result which asserts that the Conjecture is true for
the case of Leibniz algebras of dimensions less than four.

\begin{thm}  Any nilpotent Leibniz algebra of dimension less than
four is not rigid.
\end{thm}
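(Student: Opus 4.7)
The plan is a case-by-case analysis over the isomorphism classes of nilpotent Leibniz algebras. The nontrivial cases are dimensions two and three: in dimension one only $\mathbf{a_1}$ appears and the statement is vacuous. In dimension two the nilpotent algebras are $\mathbf{a_2}$ and $\mathbf{n_2}$; every algebra degenerates to $\mathbf{a_2}$ via $g_t=tI$, and the non-nilpotent Leibniz algebra $R$ with $[e_1,e_1]=e_2$, $[e_2,e_1]=e_2$ degenerates to $\mathbf{n_2}$ via $g_t(e_1)=t^{-1}e_1$, $g_t(e_2)=t^{-2}e_2$ (one checks the Leibniz identities for $R$ and that the resulting structure constants converge to those of $\mathbf{n_2}$).

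For dimension three I would appeal to the classification of $\LN_3$ from \cite{AOR}, whose list up to isomorphism consists of $\mathbf{a_3}$, $\mathbf{n_2}\oplus\mathbb{C}$, the Heisenberg Lie algebra $\mathbf{n_3}$, the family $\lambda_4(\alpha)$, $\lambda_5$, and $\lambda_6=\mathbf{NF_3}$. The first is trivial; $\mathbf{n_2}\oplus\mathbb{C}$ follows from Proposition~\ref{P:n2} applied to any non-Lie three-dimensional Leibniz algebra, for instance $\lambda_5$; the Heisenberg algebra $\mathbf{n_3}$ is handled by the degeneration $\mathbf{r_3}\to\mathbf{n_3}$ established in the previous theorem; and $\lambda_6=\mathbf{NF_3}$ is handled by applying the family $g_t(e_i)=t^{-i}e_i$ to $\mathbf{R_3}$ as in Remark~\ref{R:Rn}.

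The remaining cases $\lambda_4(\alpha)$ and $\lambda_5$ are the substantial part, since both are rigid in the nilpotent variety $\LN_3$, so any algebra degenerating to them must itself be non-nilpotent. For each I would exhibit a three-dimensional non-nilpotent Leibniz algebra $\mu$ together with a one-parameter family $g_t\in\GL_3(\mathbb{C}(t))$ such that $\lim_{t\to 0}g_t\ast\mu$ equals $\lambda_4(\alpha)$, respectively $\lambda_5$, presented in a convenient normal form (for $\lambda_4(\alpha)$ the form $[e_2,e_1]=e_3$, $[e_1,e_2]=\beta e_3$ of \eqref{E:beta}), and then verify the Leibniz identities for $\mu$ and compute the limit of the structural constants directly.

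The central difficulty is the selection of $\mu$: the Leibniz identity blocks the obvious first-order perturbations of $\lambda_4(\alpha)$ and $\lambda_5$ that would break nilpotency (most changes to brackets outside the already-occupied directions force a contradiction through identities such as $[e_1,[e_2,e_1]]=[[e_1,e_2],e_1]-[[e_1,e_1],e_2]$, with similar identities constraining any candidate $[e_3,e_i]$ term), so the witness must instead be drawn from the classification of non-nilpotent three-dimensional Leibniz algebras. Once a suitable $\mu$ is identified, the computation of $g_t$ and the verification of the limit proceed along the same lines as in the dimension-two case and in the Lie algebra theorem above.
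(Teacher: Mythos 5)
Your overall strategy coincides with the paper's: reduce to the algebras that are rigid inside the nilpotent variety ($\mathbf{n_2}$ in dimension two; $\boldsymbol{\lambda_4}(\alpha)$, $\boldsymbol{\lambda_5}$, $\boldsymbol{\lambda_6}$ in dimension three) and exhibit, for each, a non-isomorphic (necessarily non-nilpotent, for the rigid-in-$\LN_3$ ones) algebra degenerating to it. Your dimension-two witness is correct and is in fact isomorphic to the paper's $\mathbf{r_2}$ (send $e_1\mapsto e_1+e_2$), your dispatching of $\mathbf{a_3}$, $\mathbf{n_2}\oplus\mathbb{C}$, $\mathbf{n_3}$ via Proposition \ref{P:n2} and the Lie-algebra theorem is fine, and $\boldsymbol{\lambda_6}=\mathbf{NF_3}$ is correctly settled by Remark \ref{R:Rn}.

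However, there is a genuine gap: for the two cases that carry all the difficulty, $\boldsymbol{\lambda_4}(\alpha)$ and $\boldsymbol{\lambda_5}$, you only announce that you \emph{would} exhibit a suitable non-nilpotent $\mu$ and a family $g_t$, and you yourself flag the selection of $\mu$ as ``the central difficulty'' without resolving it. That selection is the actual content of the proof, and it is not routine: the witness for $\boldsymbol{\lambda_4}(\alpha)$ must depend on $\alpha$ in a nontrivial way. The paper produces, for $\boldsymbol{\lambda_4}(\alpha)$, the solvable algebra $\mathbf{r_{3,2}(\alpha)}$ with $[e_1,e_1]=e_3$, $[e_1,e_2]=-(2+\beta)\alpha e_1+e_2+e_3$, $[e_2,e_1]=(2+\beta)\alpha e_1-e_2$, $[e_2,e_2]=\alpha e_3$, $[e_3,e_1]=\beta e_3$, $[e_3,e_2]=(2+\beta)\beta\alpha e_3$, where $\beta=\frac{1-4\alpha+\sqrt{1-4\alpha}}{2\alpha}$, degenerating via $g_t=\mathrm{diag}(t^{-1},t^{-1},t^{-2})$; and for $\boldsymbol{\lambda_5}$ the algebra $\mathbf{r_{3,1}}$ with $[e_2,e_1]=-e_2+e_3$, $[e_3,e_1]=-2e_3$, $[e_1,e_2]=e_2+e_3$, $[e_2,e_2]=e_3$, degenerating via $g_t=\mathrm{diag}(t^{-1},t^{-2},t^{-3})$. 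Until you write down such witnesses (and check the Leibniz identity, non-nilpotency, and the limits), the theorem is unproved precisely for the two families where the statement is nontrivial.
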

\begin{proof}  From \cite{AOR} we have a unique two-dimensional rigid nilpotent
Leibniz algebra $\mathbf{n_2}: [e_1, e_1] = e_2$.
It is easy to check that the algebra $\mathbf{r_2}$ with the  multiplication table $[e_2, e_1] =
e_2$ degenerates to $\mathbf{n_2}$ via the family of transformations:
\[g_t: \quad g_t(e_1)=t^{-1}e_1 - t^{-2}e_2, \qquad
g_t(e_2)=t^{-2}e_2.\]

For the three-dimensional case we have the rigid nilpotent algebras $\boldsymbol{ \lambda_4}(\alpha), \boldsymbol{ \lambda_5}$ and
$\boldsymbol{ \lambda_6}$.

Let us consider the solvable Leibniz algebra
\[\mathbf{ r_{3,2}(\alpha)}:\left\{
\begin{array}{ll} [e_1, e_1] = e_3, & [e_1, e_2] = -(2+\beta)\alpha e_1 + e_2 +e_3, \\[1mm]
[e_2, e_1] = (2+\beta)\alpha e_1 - e_2, & [e_2, e_2] = \alpha
e_3,\quad  \alpha \neq 0,\\[1mm]
[e_3, e_1] =\beta e_3,& [e_3, e_2] = (2+\beta)\beta \alpha
e_3,\end{array}\right.\] where $\beta = \frac {1-4\alpha +
\sqrt{1-4\alpha }} {2\alpha}$.

Then $g_t$ defined as \[g_t(e_1)=t^{-1}e_1, \quad
g_t(e_2)=t^{-1}e_2, \quad g_t(e_3)=t^{-2}e_3\] degenerates the algebra $\mathbf{r_{3,2}(\alpha)}$ to the algebra $\boldsymbol{ \lambda_4}(\alpha)$.

Consider the solvable Leibniz algebra \[\mathbf{r_{3,1}}: \ [e_2, e_1] = - e_2+e_3,
\quad [e_3, e_1] = - 2e_3, \quad [e_1, e_2] = e_2+e_3, \quad [e_2,
e_2] = e_3.\]

Then $\mathbf{r_{3,1}} \rightarrow \boldsymbol{ \lambda_5}$ via $g_t$, which is given by \[g_t(e_1)=t^{-1}e_1, \qquad g_t(e_2)=t^{-2}e_2, \qquad
g_t(e_3)=t^{-3}e_3.\]

Due to Remark \ref{R:Rn}, we get $\mathbf{R_3} \rightarrow \boldsymbol{ \lambda_6}$.
\end{proof}

\subsection{On the algebra of level one}\

In this subsection we show that the result of Theorem
\ref{T:level} is not complete. Namely, the algebra $\mathbf{n_2} \oplus \mathbf{a_{n-2}}$
is also an algebra of level one and it is not isomorphic to the algebras
$\mathbf{p_n^{\pm}}$ and $\mathbf{n_3^{\pm}}\oplus \mathbf{a_{n-3}}$.

\begin{thm} The $n$-dimensional commutative algebra $\mathbf{n_2} \oplus
\mathbf{a_{n-2}}$ is of level one.
\end{thm}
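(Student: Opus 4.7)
The plan is to show that the orbit closure $\overline{\Orb(\mathbf{n_2}\oplus \mathbf{a_{n-2}})}$ contains only the algebras $\mathbf{n_2}\oplus \mathbf{a_{n-2}}$ itself and $\mathbf{a_n}$. Once this is established, the only direct degeneration from $\mathbf{n_2}\oplus \mathbf{a_{n-2}}$ is the trivial one $\mathbf{n_2}\oplus \mathbf{a_{n-2}}\to \mathbf{a_n}$, which forces $\lev_n(\mathbf{n_2}\oplus \mathbf{a_{n-2}})=1$.

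The key step will be to identify two closed, $\GL_n(\mathbb{C})$-invariant conditions that hold at $\mathbf{n_2}\oplus \mathbf{a_{n-2}}$ and hence throughout its orbit closure. The first is commutativity $[x,y]=[y,x]$, which is a linear condition on the structure constants. The second is that the left-multiplication map $M_\lambda\colon V\to \Hom(V,V)$, $x\mapsto[x,-]$, has rank at most one; equivalently, $\dim \Ann_L(\lambda)\geq n-1$. This second condition is cut out by the vanishing of all $2\times 2$ minors of the matrix of $M_\lambda$, whose entries are polynomials in the structure constants, so it is closed, and its rank is plainly independent of basis.

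Next I would classify all commutative $\mu \in \mathcal{L}eib_n$ with $\dim \Ann_L(\mu)\geq n-1$. If $\Ann_L(\mu)=V$ then $\mu=\mathbf{a_n}$. Otherwise pick $e_1\notin \Ann_L(\mu)$ and a basis $e_2,\dots,e_n$ of $\Ann_L(\mu)$; commutativity kills $[e_1,e_j]=[e_j,e_1]=0$ for $j\geq 2$, leaving $[e_1,e_1]=v$ as the only possibly non-zero product. Applying the Leibniz identity to $x=y=z=e_1$ gives $[e_1,v]=0$, which in coordinates reads $c_1 v=0$ where $v=\sum c_i e_i$. So either $v=0$, giving $\mu=\mathbf{a_n}$, or $v\in \langle e_2,\dots,e_n\rangle$, in which case a basis change sending $v$ to $e_2$ yields $\mu\cong \mathbf{n_2}\oplus \mathbf{a_{n-2}}$.

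The main conceptual point will be the second closedness statement: the rank-one condition on $M_\lambda$ is precisely what separates $\mathbf{n_2}\oplus \mathbf{a_{n-2}}$ from $\mathbf{n_3^{\pm}}\oplus \mathbf{a_{n-3}}$, where that rank equals two, and from $\mathbf{p_n^{\pm}}$, where it is still larger. Once the semicontinuity of rank is in place, the classification above requires no explicit construction of degeneration families $g_t$, and the theorem is immediate.
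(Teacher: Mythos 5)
Your proposal is correct, but it takes a genuinely different route from the paper. The paper's proof has two stages: first it rules out degenerations of $\mathbf{n_2}\oplus \mathbf{a_{n-2}}$ to $\mathbf{p_n^{\pm}}$ and $\mathbf{n_3^{\pm}}\oplus \mathbf{a_{n-3}}$ by explicit limit computations on products of the entries of $g_t$ and $g_t^{-1}$; then, for an arbitrary direct degeneration $\mathbf{n_2}\oplus \mathbf{a_{n-2}}\rightarrow\lambda$, it splits into cases --- if $\lambda$ is a non-Lie Leibniz algebra, Proposition \ref{P:n2} gives $\lambda\rightarrow \mathbf{n_2}\oplus \mathbf{a_{n-2}}$, contradicting the strict decrease of orbit dimension along nontrivial degenerations, and if $\lambda$ is a Lie algebra, a direct computation shows the limit bracket is symmetric, hence abelian. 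You instead pin down the whole orbit closure by two closed $\GL_n(\mathbb{C})$-invariant conditions, commutativity and the semicontinuous bound $\operatorname{rank}(x\mapsto [x,-])\le 1$, and then classify the commutative Leibniz algebras whose left annihilator has codimension at most one; your use of the Leibniz identity on $\big[e_1,[e_1,e_1]\big]$ correctly eliminates a possible $e_1$-component of $[e_1,e_1]$, so only $\mathbf{a_n}$ and $\mathbf{n_2}\oplus \mathbf{a_{n-2}}$ survive. This is legitimate and in some respects cleaner: it yields the stronger statement $\overline{\Orb(\mathbf{n_2}\oplus \mathbf{a_{n-2}})}=\Orb(\mathbf{n_2}\oplus \mathbf{a_{n-2}})\cup\Orb(\mathbf{a_n})$, needs neither Proposition \ref{P:n2} nor the orbit-dimension inequality, and subsumes the paper's first stage, since $\mathbf{p_n^{\pm}}$ and $\mathbf{n_3^{\pm}}\oplus \mathbf{a_{n-3}}$ all violate the rank-one condition (the minus variants already violate commutativity). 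What the paper's route buys is reuse of general machinery valid for every non-Lie Leibniz algebra rather than an ad hoc classification, which fits the invariant-theoretic style of the rest of the article; your route requires carrying out the small classification, but that step is complete and sound as you sketched it.
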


\begin{proof} Firstly, we shall prove that the algebra $\mathbf{n_2}\oplus
\mathbf{a_{n-2}}$ does not degenerate to $\mathbf{p_n^{\pm}}$ and $\mathbf{n_3^{\pm}}\oplus
\mathbf{a_{n-3}}$. Since $\mathbf{n_2}\oplus \mathbf{a_{n-2}}$ is commutative, it is enough
to prove it for $\mathbf{p_n^{+}}$ and $\mathbf{n_3^{+}}\oplus \mathbf{a_{n-3}}$.

Let us assume the contrary, that is there exists a family $g_t \in \GL_n(\mathbb{C})$ such
that $\mathbf{n_2}\oplus \mathbf{a_{n-2}} \rightarrow \mathbf{p_n^{+}}$. Let $g_t$ be of the form
\[g_t(e_i) = \sum\limits_{s=1}^n \alpha_{i, s}(t)e_s, \quad g^{-1}_t(e_i) = \sum\limits_{s=1}^n \beta_{i, s}(t)e_s.\]

Consider $g_t(e_2) = \displaystyle \sum_{i=1}^{n}\alpha_{2, i}(t)e_i$. We choose
numbers $p, q$ ($p\neq q$) such that \[\lim_{t\to 0}\frac
{\alpha_{2, p}(t)} {\alpha_{2, q}(t)} < \infty.\]

Consider
\begin{equation}\label{E:gt}
g_t([g^{-1}_t(e_1), g^{-1}_t(e_p)]) =\beta_{1,1}(t)
\beta_{p,1}(t)g_t(e_2) = \beta_{1,1}(t)
\beta_{p,1}(t)\sum_{i=1}^{n}\alpha_{2, i}(t)e_i.
\end{equation}

Since in the algebra $\mathbf{p_n^{+}}$ we have $[e_1, e_p] = e_p$, then $\displaystyle \lim_{t\to 0}g_t([g^{-1}_t(e_1),
g^{-1}_t(e_p)])=e_p$. Therefore, we obtain
\[\lim_{t \to 0} \beta_{1,1}(t) \beta_{p,1}(t)\alpha_{2, p}(t) =1, \quad
\lim_{t \to 0} \beta_{1,1}(t) \beta_{p,1}(t)\alpha_{2,
q}(t) =0.\]

On the other hand,
\begin{align*}
\lim_{t \to 0} \beta_{1,1}(t) \beta_{p,1}(t)\alpha_{2, p}(t) & =
\lim_{t \to 0} \beta_{1,1}(t) \beta_{p,1}(t)\alpha_{2,
q}(t) \frac{\alpha_{2, p}(t)} {\alpha_{2, q}(t)} \\
& = \lim_{t \to 0} \beta_{1,1}(t) \beta_{p,1}(t)\alpha_{2, q}(t) \cdot \lim_{t \to 0}\frac{\alpha_{2, p}(t)} {\alpha_{2, q}(t)} = 0.
\end{align*}

This is a contradiction with the assumption of the existence of $g_t$, i.e.,
$\mathbf{n_2}\oplus \mathbf{a_{n-2}}$ does not degenerate to $\mathbf{p_n^+}$.

Let us show that $\mathbf{n_2} \oplus \mathbf{a_{n-2}}$ does not degenerate to the algebra $\mathbf{n_3^{+}}\oplus \mathbf{a_{n-3}}$. Similarly as above we can assume the existence of a family $g_t$.

From \eqref{E:gt} we get \[g_t([g^{-1}_t(e_1),
g^{-1}_t(e_1)])=\beta_{1,1}(t)
\beta_{1,1}(t)\sum_{i=1}^{n}\alpha_{2, i}(t)e_i.\]

Since in the algebra $\mathbf{n_3^{+}}\oplus \mathbf{a_{n-3}}$ we have the product
$[e_1, e_1] = 0$, then \[\lim_{t\to 0}g_t([g^{-1}_t(e_1),
g^{-1}_t(e_1)])=0.\] Consequently,  $\displaystyle \lim_{t \to
0}\beta_{1,1}(t) \beta_{1,1}(t)\alpha_{2, 3}(t)=0$.

Similarly, from \eqref{E:gt} with $p=2$, i.e.,  \[g_t([g^{-1}_t(e_1),
g^{-1}_t(e_2)]) = \beta_{1,1}(t)
\beta_{2,1}(t)\sum_{i=1}^{n}\alpha_{2, i}(t)e_i\] and of the product
$[e_1, e_2] = e_3$ in $\mathbf{n_3^{+}}\oplus \mathbf{a_{n-3}}$, we conclude
 \[\lim_{t \to 0}\beta_{1,1}(t)\beta_{2,1}(t)\alpha_{2, 3}(t)=1.\]

Using the equalities
\begin{align*}
g_t([g^{-1}_t(e_2), g^{-1}_t(e_2)]) & =
g_t\big(\big[\sum\limits_{s=1}^n \beta_{2, s}(t)e_s, \sum\limits_{s=1}^n
\beta_{2, s}(t)e_s\big]\big) \\
&=\beta_{2,1}(t) \beta_{2,1}(t)g_t(e_2) = \beta_{2,1}(t)
\beta_{2,1}(t)\sum_{i=1}^{n}\alpha_{2, i}(t)e_i
\end{align*}
and $[e_2, e_2] = 0$, we derive $\displaystyle \lim_{t \to 0}\beta_{2,1}(t)
\beta_{2,1}(t)\alpha_{2, 3}(t)=0$.

Thus, we summarize
\[\lim_{t \to 0}
\beta_{1,1}(t) \beta_{1,1}(t)\alpha_{2, 3}(t)=\lim_{t \to
0} \beta_{2,1}(t) \beta_{2,1}(t)\alpha_{2, 3}(t)=0, \quad \lim_{t
\to 0} \beta_{1,1}(t) \beta_{2,1}(t)\alpha_{2, 3}(t)=1.\]

However,
\[\lim_{t \to 0}\big(\beta_{1,1}(t) \beta_{2,1}(t)\alpha_{2, 3}(t)\big)^2 =\lim_{t \to 0}\beta_{1,1}(t)
\beta_{1,1}(t)\alpha_{2, 3}(t) \cdot \lim_{t \to
0}\beta_{2,1}(t) \beta_{2,1}(t)\alpha_{2, 3}(t)=0.\]

Thus, the algebra $\mathbf{n_2} \oplus \mathbf{a_{n-2}}$ does not degenerate to $\mathbf{n_3^{+}}\oplus \mathbf{a_{n-3}}$.

Now we shall prove that  $\lev_n(\mathbf{n_2}\oplus \mathbf{a_{n-2}})=1$. Assume that there exists a Leibniz algebra $\lambda$ such that
$\mathbf{n_2}\oplus \mathbf{a_{n-2}} \rightarrow \lambda$ is a direct degeneration.
Then $\dim \Orb(\lambda) < \dim \Orb(\mathbf{n_2}\oplus \mathbf{a_{n-2}})$ (see
\cite{Gorb}).

If $\lambda$ is a non-Lie Leibniz algebra, then by Proposition
\ref{P:n2} we have that  $\lambda \rightarrow \mathbf{n_2}\oplus
\mathbf{a_{n-2}}$. Then there exists a chain of direct degenerations
$\lambda \rightarrow \lambda_1 \rightarrow \dots \rightarrow
\lambda_k \rightarrow \mathbf{n_2}\oplus \mathbf{a_{n-2}}$. Again by \cite{Gorb}, we
have that $\dim \Orb( \mathbf{n_2}\oplus \mathbf{a_{n-2}})<\dim \Orb(\lambda_k) < \dots < \dim
\Orb(\lambda_1) < \dim \Orb(\lambda)$. This is a contradiction with $\dim \Orb(\lambda)
< \dim \Orb(\mathbf{n_2}\oplus \mathbf{a_{n-2}})$.

Let $\lambda$ be a Lie algebra, then by assumption there exists a family
$g_t$ such that \[\lim_{t\to 0} \ g_t \ast (\mathbf{n_2}\oplus \mathbf{a_{n-2}})=
\lambda.\]
 Then from the following equalities
\[g_t([g^{-1}_t(e_i), g^{-1}_t(e_j)]) =
g_t\big(\big[\sum\limits_{s=1}^n \beta_{i, s}(t)e_s, \sum\limits_{s=1}^n
\beta_{j, s}(t)e_s\big]\big) = \beta_{i,1}(t) \beta_{j,1}(t)g_t(e_2),\] we
deduce $\lambda(e_i, e_j) = \lambda(e_j, e_i)$. Since $\lambda$ is a Lie
algebra, it follows that it is abelian. Consequently, the algebra
$\mathbf{n_2}\oplus \mathbf{a_{n-2}}$ is of level one.
\end{proof}

\section*{ Acknowledgements}

 The first and third authors were supported by Ministerio
de Ciencia e Innovaci\'on (European FEDER support included), grant
MTM2009-14464-C02, and by Xunta de Galicia, grant Incite09 207 215
PR. The fourth author was partially supported by the Grant (RGA) No:11-018 RG/Math/AS\_I--UNESCO FR: 3240262715.

\end{document}